\newtheorem{theorem}{Theorem}[section]
\newtheorem{mtheorem}[theorem]{Main Theorem}
\newtheorem{lemma}[theorem]{Lemma}
\newtheorem{proposition}[theorem]{Proposition}
\newtheorem{conjecture}[theorem]{Conjecture}
\theoremstyle{definition}
\newtheorem{definition}[theorem]{Definition}
\newtheorem{example}[theorem]{Example}
\theoremstyle{remark}
\newtheorem*{remark}{Remark}
\title{On the Classification of Schubert Varieties in Partial Flag Varieties}
\author{Yanjun Chen}
\address[Yanjun Chen]{Institut de Math\'ematique d'Orsay, Universit\'e Paris-Saclay, B\^atiment 307, rue Michel Magat, Orsay, France}
\email{yanjun.chen@universite-paris-saclay.fr}
\date{\today}
\begin{document}

\maketitle

\begin{abstract}
We generalize the classification of isomorphism classes of Schubert varieties in \cite{RS21} in complete flag varieties $G/B$ to a class of partial flag varieties $G/P$. In particular, we classify all Schubert varieties in $G/P$ where $P$ is a minimal parabolic subgroup and all Schubert surfaces. We also obtain several pairs of isomorphisms of Schubert varieties from folding the root system. This allows us to find an upper bound of the cardinality of isomorphism classes of Schubert three-folds.
\end{abstract}

\section{Introduction}
Schubert varieties form an extensively studied class of algebraic varieties whose properties are often characterized by combinatorics. The isomorphism problem for Schubert varieties, first raised by Develin, Martin, and Reiner in \cite{DMR07}, asks for a classification of all Schubert varieties up to algebraic isomorphism. In the same paper, they classified a class of smooth Schubert varieties in type $A$ partial flag varieties. Using Cartan equivalence, Richmond and Slofstra solved this problem for Schubert varieties in complete flag varieties in \cite{RS21}. On the other hand, Richmond, Țarigradschi, and Xu solved this problem for cominuscule Schubert varieties in \cite{RTX24} using labeled posets.

To describe our results, we set the following notations. We only consider Schubert varieties of finite types over $\mathbb{C}$ for simplicity. Let $G$ be a complex reductive Lie group, $T$ a maximal torus, and $B$ a Borel subgroup containing $T$. Then a root system is defined, with the correspondent set of simple reflections $S$, Cartan matrix $A = (a_{st})_{(s, t) \in S^2}$, and the Weyl group $W$. The pair $(W, S)$ forms a Coxeter system. A standard parabolic subgroup $P$ contains $B$ corresponding to a subset $I$ of $S$. We denote the flag variety $G/P$ by $X(A, I)$. The subset $I$ generates a subgroup $W_I \subset W$. In every coset $W/W_I$, we take the elements with minimal length and form a set $W^I$. Each $B$-orbit $BwP/P$ for some $w \in W^I$ is called a Schubert cell, and its Zariski closure is a Schubert variety, denoted by $X(w, A, I)$ since it is uniquely determined by the triple $(w, A, I)$. Let $\leq$ denote the Bruhat order for the Coxeter system $(W,S)$. We denote the \emph{support} of $w$ to be
\begin{equation*}
    S(w) = \{ s \in S|s \leq w \}.
\end{equation*}

\begin{definition}
    Let
    \begin{align*}
        A = (a_{st})_{(s, t) \in S^2} && \mbox{and} && A' = (a'_{s't'})_{(s', t') \in S'^2}
    \end{align*}
    be two Cartan matrices with associated Weyl groups $W$ and $W'$, and sets of simple reflections $S$ and $S'$, respectively. Let $I \subset S$ and $I' \subset S'$. For elements $w \in W^I$ and $w' \in W'^{I'}$, we say the triples $(w, A, I)$ and $(w', A', I')$ are \emph{Cartan equivalent} if there is a bijection $\tau: S(w) \to S(w')$ sending $S(w) \cap I$ to $S(w') \cap I'$, such that:
    \begin{enumerate}
        \item for some reduced word $w = s_1 \cdots s_k$, $w' = \tau(s_1) \cdots \tau(s_k)$ is also a reduced word;
        \item for any $t_1, t_2 \in S(w)$, $a_{t_1, t_2} = a'_{\tau(t_1), \tau(t_2)}$ whenever $t_1 t_2 \leq w$.
    \end{enumerate}
\end{definition}

When $I = I' = \varnothing$, our definition of Cartan equivalence coincides with the one given by Richmond and Slofstra. However, their Cartan equivalence does not behave well when $I$ or $I'$ is nonempty since they did not take account of the sets $I$ and $I'$; see \cite[Example 1.5]{RS21}. Using Cartan equivalence, they classified Schubert varieties of the form $X(w, A, \varnothing)$. They show that two such Schubert varieties $X(w, A, \varnothing), X(w', A’, \varnothing)$ are isomorphic if and only if $(w, A, I), (w', A', I')$ are Cartan equivalent. Motivated by their methods, we extend their work.

\begin{theorem}\label{thm-iso->car}
    Assume that $|S(w) \cap I| \leq 1$ and $|S(w') \cap I'| \leq 1$. If $X(w, A, \{ s \})$ and $X(w', A', \{ s' \})$ are algebraically isomorphic, then $(w, A, I)$ and $(w', A', I')$ are Cartan equivalent.
\end{theorem}

\begin{theorem}\label{thm-car->iso}
    If $(w, A, I)$ and $(w', A', I')$ are Cartan equivalent, then $X(w, A, \{ s \})$ and $X(w', A', \{ s' \})$ are algebraically isomorphic.
\end{theorem}

The proof of the above theorems will be given in Sections \ref{co} and \ref{con iso}, respectively.

Since we allow the subset $I$ to satisfy the condition $|S(w) \cap I| \leq 1$, a larger class of Schubert varieties is classified. It also completely solves the isomorphism problem of Schubert varieties in flag varieties corresponding to minimal parabolic subgroups, in which case, $|I| = 1$.

Schubert varieties of dimension zero and one are isomorphic to a point and the projective line $\mathbb{P}^1$, respectively. Applying these two theorems, we can easily classify the Schubert surfaces without explicit computation.

\begin{theorem}[Classification of Schubert Surfaces]\label{cla}
    There are seven isomorphism classes of Schubert surfaces. Precisely, any Schubert surface is isomorphic to one and exactly one of the following surfaces:
    \begin{enumerate}
        \item product of projective lines $\mathbb{P}^1 \times \mathbb{P}^1$,
        \item projective plane $\mathbb{P}^2$,
        \item the $n$-th Hirzebruch surface $\Sigma_n$ for $n = 1, 2, 3$,
        \item the cone over a smooth conic, and
        \item a Schubert surface of type $G_2$.
    \end{enumerate}
\end{theorem}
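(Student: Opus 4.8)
The plan is to apply the Main Theorem (Theorem \ref{MT}) to elements $w$ of length two. A Schubert surface $X(w,A,I)$ satisfies $\dim X(w,A,I)=\ell(w)=2$, so every reduced word of $w$ has the form $s_a s_b$ with $s_a\neq s_b$, and $S(w)=\{s_a,s_b\}$. Since $\ell(w)>0$ together with $w\in W^I$ forces $S(w)\not\subseteq I$, we automatically get $|S(w)\cap I|\leq 1$, so Theorem \ref{MT} applies to any pair of Schubert surfaces; moreover, feeding the obvious identification of supports as $\tau$ into condition (2) shows that the isomorphism class of $X(w,A,I)$ depends only on the reduced word $s_a s_b$, the $2\times 2$ submatrix $A|_{S(w)}$, and the subset $I\cap S(w)$.

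Next I would enumerate this data. The submatrix $A|_{S(w)}$ is a finite-type Cartan matrix of rank two, hence of type $A_1\times A_1$, $A_2$, $B_2$, or $G_2$; set $k=-a_{s_a,s_b}\in\{0,1,2,3\}$, so that $k=0$ exactly when $s_a$ and $s_b$ commute. Examining the right descents of $w=s_a s_b$: if $s_a,s_b$ commute then $s_a s_b=s_b s_a$, both $s_a$ and $s_b$ are descents, and $w\in W^I$ forces $I\cap S(w)=\varnothing$; if $s_a,s_b$ do not commute then only $s_b$ is a descent, and $w\in W^I$ forces $I\cap S(w)\in\{\varnothing,\{s_a\}\}$. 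Writing $\epsilon=|I\cap S(w)|$, the admissible pairs $(\epsilon,k)$ are exactly
\[
(0,0),\quad (0,1),\quad (0,2),\quad (0,3),\quad (1,1),\quad (1,2),\quad (1,3),
\]
seven in total, and each is realized by an actual Schubert surface (take $G$ with the relevant rank-two Cartan matrix): respectively $X(s_1 s_2,2I_2,\varnothing)$, $X(s_1 s_2,M_1,\varnothing)$, $X(s_2 s_1,M_2,\varnothing)$, $X(s_2 s_1,M_3,\varnothing)$, $X(s_1 s_2,M_1,\{s_1\})$, $X(s_2 s_1,M_2,\{s_2\})$, $X(s_2 s_1,M_3,\{s_2\})$.

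It remains to show these seven are pairwise non-isomorphic and to name them. By the last sentence of Theorem \ref{MT}, $\epsilon$ is an isomorphism invariant, so it suffices to separate surfaces with the same $\epsilon$ by $k$. If $s_a,s_b$ do not commute, then $w$ has a \emph{unique} reduced word, so the only bijection $\tau$ permitted by condition (2)(a) is the order-preserving one; condition (2)(b) applied to $t_1 t_2=s_a s_b\leq w$ then forces the two surfaces to share the value $a_{s_a,s_b}$, hence $k$. If $s_a,s_b$ commute (the case $k=0$), then both $s_a s_b$ and $s_b s_a$ lie below $w$, so condition (2)(b) forces $A|_{S(w)}$ to equal $2I_2$ exactly, separating $k=0$ from $k\geq 1$. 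Finally, Examples \ref{*}, \ref{12}, \ref{21,0}, and \ref{21,1} identify the seven representatives with $\mathbb{P}^1\times\mathbb{P}^1$, $\Sigma_1$, $\Sigma_2$, $\Sigma_3$, $\mathbb{P}^2$, the cone over a smooth conic, and $X(s_2 s_1,M_3,\{s_2\})$, respectively, completing the classification.

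The main obstacle is the careful bookkeeping inside condition (2) of Theorem \ref{MT}: one must use that in the non-commuting case a length-two element has a unique reduced word (so the order-reversing bijection is inadmissible), that $w\in W^I$ genuinely restricts $I\cap S(w)$ as above, and that the orientation of the rank-two submatrix --- which simple reflection heads the reduced word --- must be tracked, since $a_{s_a,s_b}$ and $a_{s_b,s_a}$ differ in types $B_2$ and $G_2$. Once this combinatorial normal form $(\epsilon,k)$ is pinned down, matching it against the explicit surfaces computed in Section \ref{EC} is routine.
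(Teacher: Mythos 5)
Your proof is correct and takes essentially the same route as the paper's: reduce to a rank-two Cartan submatrix via Lemma \ref{min emb}, enumerate the finitely many admissible triples $(w, A, I)$, and sort them into isomorphism classes using both directions of Theorem \ref{MT} together with the explicit identifications from Examples \ref{12}--\ref{*}. Your $(\epsilon, k)$ bookkeeping is just a compact repackaging of the paper's six-class list and the same invariants (the entry $a_{s_a s_b}$ and $|S(w)\cap I|$).
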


The proof will be given at the end of Section \ref{EC}. In the proof, we will construct of the Schubert surface of type $G_2$ stated above.

We say two Schubert varieties $X(w, A, I)$ and $X(w', A', I')$ are \emph{equally supported} if $|S(w)| = |S(w')|$. By definition, if $(w, A, I)$ and $(w', A', I')$ are Cartan equivalent, then $X(w, A, I)$ and $X(w', A', I')$ are equally supported.

\begin{conjecture}\label{conj}
    Assume that $X(w, A, I)$ and $X(w', A', I')$ are equally supported. The following are equivalent:
    \begin{enumerate}
        \item the Schubert varieties $X(w, A, I)$ and $X(w', A', I')$ are algebraically isomorphic;
        \item $(w, A, I)$ and $(w', A', I')$ are Cartan equivalent.
    \end{enumerate}
\end{conjecture}

By Theorem \ref{thm-car->iso}, it remains to show that the equally supported, isomorphic Schubert pairs are Cartan equivalent. We will leave it for future study.

When the dimensions of Schubert varieties are greater than two, the isomorphic pairs that are not equally supported arise. An example is given in \cite[Example 1.5]{RS21}. Hence the notion of Cartan equivalence fails in this case. The example comes from a more general setting, called folding the root systems. We construct several classes of isomorphic pairs from in this setting.

\begin{theorem}\label{fold-theorem}
Let $R$ be a simply laced root system with simple roots $\Delta$, and let $\tau$ be a diagram automorphism of $(R, \Delta)$. Let
\begin{align*}
A = (a_{s_i s_j})_{(s_i, s_j) \in S^2} && \mbox{and} && A^\tau = (a_{t_k t_l})_{(t_k, t_l) \in {S^\tau}^2}
\end{align*}
be the Cartan matrices of the root systems $(R, \Delta)$ and $(R^\tau, \Delta^\tau)$, respectively. Let $I \subset S$ and $I^\tau \subset S^\tau$ and take two elements $w \in W^I$ and $w^\tau \in {W^\tau}^{I^\tau}$. Assume that the map
\begin{align*}
[1, w^\tau]^{I^\tau} \to [1, w]^I, && v \mapsto v^{\min}
\end{align*}
is bijective, where $v^{\min}$ is the minimal representative of $v$ in $W^I$. Then $X(w, A, I)$ and $X(w^\tau, A^\tau, I^\tau)$ are isomorphic as algebraic varieties.
\end{theorem}

We will recall the notion of folding and define the necessary notations in Section \ref{folding-recall}. The proof of this theorem will be given in Section \ref{con iso 2}.

However, no converse theorem of Theorem \ref{fold-theorem} like Theorem \ref{thm-iso->car} is known, so the classification of higher dimensional Schubert varieties is impossible if we restrict ourselves to combinatorial method yet. Nevertheless, it is still possible to give an upper bound of the cardinality of isomorphism classes of Schubert varieties with fixed dimension. We give the bound in dimension three to illustrate our idea.

\begin{theorem}\label{thm-schu 3fold}
There are at most $34$ isomorphism classes of Schubert three-folds.
\end{theorem}

The proof will be given at the end of Section \ref{explicit type}. The reader may find the explicit cardinality by explicit computation, but we refuse to do this because we want to keep the combinatorial style.

\subsection{Outline of the Paper}
In Section \ref{EC}, we will show some illustrative examples of Theorems \ref{thm-iso->car} and \ref{thm-car->iso}. In particular, we will construct all seven Schubert surfaces as claim in Theorem \ref{cla}. At the end of this section, we will prove Theorem \ref{cla}.

The following two sections will give type-independent proofs of Theorems \ref{thm-iso->car} and \ref{thm-car->iso}, respectively. In Section \ref{co}, we will study the structure of the (singular) cohomology ring of a Schubert variety to prove Theorem \ref{thm-iso->car}. In fact, we will show that, if the cohomology rings of the Schubert varieties $X(w, A, I)$ and $X(w', A', I')$ (satisfying the condition in Theorem \ref{thm-iso->car}) are isomorphic as graded rings, then $(w, A, I)$ and $(w', A', I')$ are Cartan equivalent. The cohomology ring has a canonical basis, namely the \emph{Schubert basis}, whose product formula is given by Chevalley’s formula \cite[Lemma 8.1]{FW01}. We will see that many constructions in the theory of Coxeter groups can be rephrased in terms of the Schubert basis. Our computation and discussion will follow the methods of Richmond and Slofstra in \cite[Section 4]{RS21}, but will be deeper and more explicit than theirs.

In Section \ref{con iso}, we will prove Theorem \ref{thm-car->iso}. We will explicit construct the isomorphism, first in the Lie algebra level (see Lemmas \ref{Lie homomor} and \ref{varphi}), and then using exponential map to lift the Lie algebra homomorphisms into an algebraic morphism of varieties (see Lemma \ref{pi}). This morphism is bijective, so it is an isomorphism since Schubert varieties are normal. Our proof generalizes the one given in \cite[Section 3]{RS21}.

The remaining part of this paper focuses on the non-equally supported pairs of Schubert varieties. In Section \ref{folding-recall}, we recall the notion of folding the root system by an automorphism. The literature does not fully state these materials, so we present them for complement. We will use \cite{St08} as our basic reference, but we will change their notations to present the paper in a more coherent form.

In Section \ref{explicit type}, we will explain Theorem \ref{fold-theorem} by examples in different types of folding. Then we will turn to prove Theorem \ref{thm-schu 3fold}.

Finally, in Section \ref{con iso 2}, we will prove Theorem \ref{fold-theorem}. We will imitate the method in \cite[Section 3]{RS21}. However, the proof will be much more difficult in this case.

\subsection*{Acknowledgment}
The author thanks Weihong Xu for introducing him to the isomorphism problem, providing him with relevant materials, and guiding him in writing the thesis. He also thanks Jing-Song Huang and Rui Xiong for their instruction on representation theory. He was partially supported by NSFC grant 12426507.

\section{Classification of Schubert Surfaces}\label{EC}
We provide several examples of Schubert varieties to illustrate our context. We will use $s_i$ to denote the simple reflection corresponding to the $i$-th row or column of the Cartan matrix.

\begin{example}
Considering the three Schubert varieties
\begin{align*}
X \left( s_1 s_2 s_3 s_4, \begin{pmatrix} 2 & -1 & 0 & 0 \\ -1 & 2 & -1 & 0 \\ 0 & -1 & 2 & -1 \\ 0 & 0 & -1 & 2 \end{pmatrix}, \{ s_1 \} \right), \\
X \left( s_1 s_2 s_3 s_4, \begin{pmatrix} 2 & -1 & 0 & 0 \\ -1 & 2 & -1 & 0 \\ 0 & -1 & 2 & -1 \\ 0 & 0 & -2 & 2 \end{pmatrix}, \{ s_1 \} \right), \\
X \left( s_1 s_2 s_3 s_4, \begin{pmatrix} 2 & -1 & 0 & 0 \\ -1 & 2 & -1 & 0 \\ 0 & -2 & 2 & -1 \\ 0 & 0 & -1 & 2 \end{pmatrix}, \{ s_1 \} \right),
\end{align*}
of types $A_4$, $B_4$, and $F_4$, respectively. The upper triangular parts of three Cartan matrices are the same, so by the main theorem, they are isomorphic.
\end{example}

We call a two-dimensional Schubert variety a \emph{Schubert surface}. For $n = 1, 2, 3$, we let $M_n$ denote the Cartan matrix
\begin{equation*}
M_n = \begin{pmatrix} 2 & -1 \\ -n & 2 \end{pmatrix}.
\end{equation*}
In other words, $M_1$, $M_2$, and $M_3$ are the Cartan matrices of type $A_2$, $B_2$, and $G_2$, respectively. Note that $s_1$ and $s_2$ correspond to the long and the short simple root, respectively.

\begin{example}\label{12}
Consider the Schubert surfaces $X(s_1 s_2, M_n, \{ s_1 \})$. Since the entries $a_{12} = -1$ of Cartan matrices are equal, these three surfaces are isomorphic by Theorem \ref{thm-car->iso}. Taking $n = 1$, $X(s_1 s_2, M_1, \{ s_1 \})$ is a Schubert surface in the two dimensional flag variety $Gr(1, 3) = \mathbb{P}^2$, so $X(s_1 s_2, M_n, \{ s_1 \})$ is isomorphic to $\mathbb{P}^2$. Similarly, the Schubert surfaces $X(s_1 s_2, M_n, \varnothing)$ are isomorphic for each $n$. By \cite[Example 1.1]{RS21}, they are isomorphic to the \emph{first Hirzebruch surface} $\Sigma_1$.
\end{example}

\begin{example}\label{21,1}
Consider the Schubert surfaces $X(s_2 s_1, M_n, \{ s_2 \})$. The corresponding entries $a_{21} = -n$, so these three varieties are \emph{not} isomorphic to each other by Theorem \ref{thm-iso->car}. Note that geometrically, $X(s_2 s_1, M_1, \{ s_2 \})$ is the same as $X(s_1 s_2, M_1, \{ s_1 \})$ up to symmetry, so it is also isomorphic to $\mathbb{P}^2$. On the other hand, it can be shown that $X(s_2 s_1, M_2, \{ s_2 \})$ is the cone over a smooth conic. In fact, the flag variety $X(s_2 s_1, M_2)$ is a smooth quadric hypersurface in $\mathbb{P}^4$ defined by the equation $x_1 x_5 + x_2 x_4 + x_3^2 = 0$, and the Schubert variety $X(s_2 s_1, M_2, \{ s_2 \})$ lies in the hyperplane $x_5 = 0$. Hence this Schubert surface is defined by the equation $x_2 x_4 + x_3^2 = 0$ in $\mathbb{P}^3$, which is the cone over a smooth conic. Finally, the Schubert surface $X(s_2 s_1, M_3, \{ s_2 \})$ is of type $G_2$, the last surface stated in Theorem \ref{cla}.
\end{example}

\begin{example}\label{21,0}
Consider the Schubert surfaces $X(s_2 s_1, M_n, \varnothing)$. The corresponding entries $a_{21} = -n$, so these four surfaces are \emph{not} isomorphic to each other. As mentioned in \cite[Example 1.1]{RS21}, $X(s_2 s_1, M_n, \varnothing)$ is isomorphic to the \emph{$n$-th Hirzebruch surface} $\Sigma_n$.
\end{example}

\begin{example}\label{*}
Consider the Schubert surface $X(s_1 s_2, 2I_2, \varnothing)$, where $I_2$ is the $2 \times 2$ identity matrix. The corresponding entry $a_{12} = 0$, so it is different from any Schubert surfaces mentioned above by Theorem \ref{thm-iso->car}. The Cartan matrix is of type $A_1 \times A_1$, so the flag variety can be identified with $\mathbb{P}^1 \times \mathbb{P}^1$. Comparing the dimensions, we conclude that $X(s_1 s_2, 2I_2, \varnothing)$ is $\mathbb{P}^1 \times \mathbb{P}^1$.
\end{example}

The Schubert surfaces in the above four examples are all possible Schubert surfaces. Now we prove this classification theorem (Theorem \ref{cla}).

\begin{proof}[Proof of Theorem \ref{cla}]
    Recall that the dimension of a Schubert variety $X(w, A, I)$ is the length $\ell(w)$ of $w \in W^I$ so that we can write $w = st \in W^I$ as the product of two simple reflections $s$ and $t$. \cite[Lemma 4.8]{RS16} implies that, without loss of generality, we can assume that $S(w) = S$. In other words, $S = \{ s, t \}$ and $A$ has rank two. Since a root system of rank two has type $A_1 \times A_1$, $A_2$, $B_2$, or $G_2$, after appropriately indexing rows and columns, we have $A = 2I_2$ or $A = M_n$ for $n = 1, 2, 3$. Hence $I$ is empty or a singleton. It follows that there are only finitely many possibilities for $X(w, A, I)$:
    \begin{enumerate}
        \item $X(s_1 s_2, 2I_2, \varnothing)$, isomorphic to $\mathbb{P}^1 \times \mathbb{P}^1$;
        \item $X(s_1 s_2, M_n, \varnothing)(n = 1, 2, 3)$ and $X(s_2 s_1, M_1, \varnothing)$, isomorphic to $\Sigma_1$;
        \item $X(s_2 s_1, M_n, \varnothing)(n = 2, 3)$, isomorphic to $\Sigma_n$;
        \item $X(s_1 s_2, M_n, \{ s_1 \})(n = 1, 2, 3)$ and $X(s_2 s_1, M_1, \{ s_2 \})$, isomorphic to $\mathbb{P}^2$;
        \item $X(s_2 s_1, M_2, \{ s_2 \})$, isomorphic to the cone over a smooth conic;
        \item $X(s_2 s_1, M_3, \{ s_2 \})$.
    \end{enumerate}
These varieties are not Cartan equivalent. Hence there are 7 isomorphism classes, as required.
\end{proof}

\section{The Cohomology Rings of Schubert Varieties}\label{co}
In this section, we will use the cohomology ring to study the intrinsic properties of a Schubert variety. We will see that most combinatorial information of a Schubert variety can be obtained from its cohomology ring. Some results have been proven in \cite[Section 4]{RS21} for the case $I$ being an empty set, and the proofs are generalized. We will fix a Schubert variety $X(w, A, I)$ with $w \in W^I$. Due to \cite[Lemma 4.8]{RS16}, we assume that $S = S(w)$.

\subsection{General Situation}\label{ge ca}
\begin{definition}\label{sch basis}
There is a $\mathbb{Z}$-basis $\sigma_v$, indexed by $v \in [1, w]^I$, for the integral cohomology group $H^*(X(w, A, I))$. We call this basis to be \emph{Schubert basis}, denoted by $\Sigma(w, A, I)$. The collection of Schubert basis elements of degree two is denoted by $\Sigma_1(w, A, I)$.
\end{definition}

The cup product of $H^*(X(w, A, I))$ is given by the \emph{Chevalley's formula} in \cite[Lemma 8.1]{FW01}. We let $v^{\min} \in W^I$ be the minimal length representative in the coset $vW_I$ for $v \in W$. The map $w \mapsto w^{\min}$ is order-preserving, as stated in \cite[Proposition 2.5.1]{BB05}. The Cartan matrix defines a root system $R$, a choice of positive/negative roots $R = R^+ \cup R^-$, and the set $\Delta$ of simple roots. The set $I$ defines a subset of simple roots $\Delta_I$. We write the root subsystem spanned by $\Delta_I$ as $R_I$.

\begin{lemma}[Chevalley's formula]
Suppose that $\alpha \in \Delta \setminus \Delta_I$ and $w \in W^I$. Then
\begin{equation*}
\sigma_{s_\alpha} \sigma_w = \sum \omega_\alpha(\beta^\vee) \sigma_{(w s_\beta)^{\min}},
\end{equation*}
the sum over all positive roots $\beta$ such that $\ell((w s_\beta)^{\min}) = \ell(w) + 1$. Here, $\omega_\alpha$ is the fundamental weight corresponding to $\alpha$, and $\beta^\vee = 2 \beta / \langle \beta, \beta \rangle$ is a coroot.
\end{lemma}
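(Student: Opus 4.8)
This is Chevalley's formula together with its refinement to partial flag varieties; \cite{FW01} derives it by pulling back from the complete flag variety, and the plan below is a self-contained version built on the line bundles implicit in the embedding of Lemma \ref{clo imm}. The idea is to realize $\sigma_{s_\alpha}$ as a first Chern class and to compute $\sigma_{s_\alpha}\sigma_w$ as the class of a zero divisor.

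Since $\alpha\in\Delta\setminus\Delta_I$, the fundamental weight $\omega_\alpha$ pairs to zero with every simple coroot except $\alpha^\vee$, hence is $W_I$-invariant, extends to a character of $P$, and defines a $G$-equivariant line bundle $\mathcal L_{\omega_\alpha}$ on $X(A,I)$; moreover the covectors in $L_{\omega_\alpha}^*$ are global sections of $\mathcal L_{\omega_\alpha}$ via the $G$-equivariant morphism $X(A,I)\to\mathbb P(L_{\omega_\alpha})$ defined as in Lemma \ref{clo imm}. First I would check $c_1(\mathcal L_{\omega_\alpha})=\sigma_{s_\alpha}$ in $H^2(X(A,I))$ by pairing both sides against the classes of the $T$-stable lines $\overline{Bs_{\alpha'}P/P}$, $\alpha'\in\Delta\setminus\Delta_I$, which span $H_2$: the degree of $\mathcal L_{\omega_\alpha}$ on $\overline{Bs_{\alpha'}P/P}$ equals $\langle\omega_\alpha,(\alpha')^\vee\rangle=\delta_{\alpha\alpha'}$, and $\sigma_{s_\alpha}$ pairs with these curves the same way.

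Next I would compute the product. Represent $\sigma_w$ by the opposite Schubert variety $X^w=\overline{B^-wP/P}$, of codimension $\ell(w)$, and let $\xi_w\in L_{\omega_\alpha}^*$ be the weight covector dual to the extremal weight vector of weight $w\omega_\alpha$. Then $\xi_w$ is nowhere zero on the open cell $B^-wP/P$: its zero locus is closed and $T$-stable, hence cut out by $T$-eigenfunctions, and such a subscheme of the affine space $B^-wP/P\cong\mathbb A^N$ either is empty or contains the origin $wP/P$, where $\xi_w\neq0$. By the projection formula, $\sigma_{s_\alpha}\sigma_w=c_1(\mathcal L_{\omega_\alpha})\cdot[X^w]$ equals the class of the effective divisor $\operatorname{div}_{X^w}(\xi_w)$, which is thus supported on $X^w\setminus B^-wP/P=\bigcup_{v\gtrdot w}X^v$: a nonnegative combination of the codimension-one opposite Schubert subvarieties $X^v$ with $v\in W^I$ covering $w$. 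For such a $v$, writing $v=(ws_\beta)^{\min}$ for a positive root $\beta$, the multiplicity should be $\omega_\alpha(\beta^\vee)$; this is read off by a $T$-equivariant local computation at $wP/P$, equivalently by restricting to the $\mathrm{SL}_2$-curve joining $wP/P$ and $vP/P$, on which $\mathcal L_{\omega_\alpha}$ has degree $\omega_\alpha(\beta^\vee)$ while $\xi_w$ vanishes only at $vP/P$. Summing over these curves, and adding the contributions of distinct roots $\beta$ that realize the same cover, gives
\[
\sigma_{s_\alpha}\sigma_w=\sum_\beta\omega_\alpha(\beta^\vee)\,\sigma_{(ws_\beta)^{\min}},
\]
the sum over positive roots $\beta$ for which $(ws_\beta)^{\min}$ covers $w$ in $W^I$.

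Finally I would match this with the indexing set in the statement: for $\beta>0$, the element $(ws_\beta)^{\min}$ covers $w$ in $W^I$ if and only if $\ell((ws_\beta)^{\min})=\ell(w)+1$. The forward direction holds because $W^I$ is graded by $\ell$; for the converse, $\ell((ws_\beta)^{\min})>\ell(w)$ forces $w(\beta)\in R^+$ (the reflection analogue of Lemma \ref{root}), hence $w<ws_\beta$ by Lemma \ref{B ord} and then $w=w^{\min}\leq(ws_\beta)^{\min}$ by Lemma \ref{min}, so a length gap of one between comparable elements is a cover. (The resulting identity restricts from $H^*(X(A,I))$ to $H^*(X(w,A,I))$, the terms indexed outside $[1,w]^I$ mapping to zero.) The step I expect to be the main obstacle is the multiplicity computation in the parabolic case: the $T$-fixed points $wP/P$ and $vP/P$ are generally singular points of $X^w$, so the local $\mathrm{SL}_2$ computation has to be justified using normality of Schubert varieties, and one must keep careful track of the several positive roots $\beta$ that can collapse onto a single cover $w\lessdot v$ — complications absent when $I=\varnothing$. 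The tidiest way around both is to run the entire argument in $T$-equivariant cohomology, proving the equivariant Chevalley recursion restriction-by-restriction via the localization theorem and then specializing; alternatively, one reduces to the $G/B$ case along the projection $X(A,\varnothing)\to X(A,I)$, as in \cite{FW01}.
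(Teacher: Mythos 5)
The paper does not prove this lemma at all: it is quoted verbatim from \cite[Lemma 8.1]{FW01} and used as a black box, so any proof you supply is by definition a different route. Your sketch is the standard one behind Chevalley-type formulas — realize $\sigma_{s_\alpha}$ as $c_1(\mathcal L_{\omega_\alpha})$ (correctly verified against the curve classes spanning $H_2$), intersect with the opposite Schubert variety $X^w$, and identify $c_1(\mathcal L_{\omega_\alpha})\cdot[X^w]$ with the divisor of the extremal-weight section $\xi_w$, which is supported on the codimension-one boundary strata $X^v$ with $v\in W^I$ covering $w$. The bookkeeping at the end (why $\ell((ws_\beta)^{\min})=\ell(w)+1$ is equivalent to a cover, and why the identity restricts to $H^*(X(w,A,I))$ with the terms outside $[1,w]^I$ dying) is correct and matches the remark the paper places after the lemma. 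The one step that is not yet a proof is exactly the one you flag: the multiplicity of $X^v$ in $\operatorname{div}_{X^w}(\xi_w)$. The degree computation on the $T$-stable curve joining $wP/P$ and $vP/P$ gives the order of vanishing of $\xi_w$ restricted to that curve at the fixed point $vP/P$, which is not the same thing as the coefficient of the prime divisor $X^v$ (the curve meets $X^v$ at its most singular point, not at a generic point), and in the parabolic setting several positive roots $\beta$ with $(ws_\beta)^{\min}=v$ contribute to a single coefficient. Closing this requires either pairing the divisor class against the dual basis of $H_2$, running the argument $T$-equivariantly via localization, or pulling back along $G/B\to G/P$ and descending — the last being precisely what \cite{FW01} does. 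So your proposal is a faithful and essentially sound outline of the external proof the paper relies on, with the genuinely hard step correctly isolated but deferred.
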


\begin{remark}
For any positive root $\beta$, we have either $w s_\beta \leq w$ or $w s_\beta > w$. If the former one holds, then we have $(w s_\beta)^{\min} \leq w^{\min} = w$, which is impossible. Hence $w s_\beta > w$.
\end{remark}

The following lemma is essentially \cite[Proposition 4.1]{RS21}, where the case $I = \varnothing$ was proven. Nevertheless, the statement holds in a more general case, and the proof can be extended to our context when replacing $X(w, A)$ and $X(w', A')$ in their paper by $X(w, A, I)$ and $X(w', A', I')$, respectively.

\begin{lemma}\label{basis}
Suppose that $f: X(w, A, I) \to X(w', A', I')$ is an algebraic isomorphism of Schubert varieties. Then the induced map
\begin{equation*}
\varphi^*: H^*(X(w, A, I)) \to H^*(X(w', A', I'))
\end{equation*}
is a graded ring isomorphism that identifies Schubert bases $\Sigma(w, A, I) \to \Sigma(w', A', I')$ and $\Sigma_1(w, A, I) \to \Sigma_1(w', A', I')$.
\end{lemma}

In other words, both $\Sigma(w, A, I)$ and $\Sigma_1(w, A, I)$ are determined by the cohomology group. From this definition, we have a bijection
\begin{align*}
i: [1, w]^I \to \Sigma(w, A, I), && u \mapsto \sigma_u.
\end{align*}
Each basis element $\sigma_u$ has degree $\deg \sigma_u = 2 \ell(u)$, i.e.,
\begin{equation*}
\sigma_u \in H^{2 \ell(u)}(X(w, A, I)).
\end{equation*}
Recall that the set of simple reflections in $[1, w]^I$ is just $S(w) \setminus I$. Hence the restriction of $i$ gives a one-one correspondence
\begin{align*}
i: S(w) \setminus I \to \Sigma_1(w, A, I), && s \mapsto \sigma_s.
\end{align*}

\begin{definition}
    For any
    \begin{equation*}
        \sigma = \sum_{v \in W^I} a_v \sigma_v \in H^*(X(w, A, I)),
    \end{equation*}
    we define its \emph{support}, denoted by $\Sigma(\sigma)$, to be the collection of $\sigma_v$ with $a_v \neq 0$. Let $\prec$ be the partial order on $\Sigma(w, A, I)$, the Schubert basis, generated by the relation $\sigma_u \prec \sigma_v$ if $\sigma_v \in \Sigma(\sigma_s \sigma_u)$ for some $\sigma_s \in \Sigma_1(w, A, I)$.
\end{definition}

\begin{lemma}\label{^I}
The bijection $i: [1, w]^I \to \Sigma(w, A, I)$ is a poset isomorphism.
\end{lemma}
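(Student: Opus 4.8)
The plan is to show that $i \colon [1,w]^I \to \Sigma(w,A,I)$ is simultaneously an isomorphism of sets (which is already built into Definition \ref{sch basis}) and an isomorphism of posets, where the target carries the order $\prec$ generated by the ``cup with a degree-two class'' relation. Since $i$ is a bijection that shifts degree by $v \mapsto 2\ell(v)$, it suffices to prove that for $u,v \in [1,w]^I$ we have $u \lessdot v$ in the Bruhat order on $W^I$ (i.e.\ $v$ covers $u$, so $\ell(v) = \ell(u)+1$) if and only if $\sigma_u \prec \sigma_v$ is one of the generating relations, i.e.\ $\sigma_v \in \Sigma(\sigma_s \sigma_u)$ for some $s \in S(w) \setminus I$. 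Taking the transitive closure on both sides then yields the poset isomorphism, using the fact (Lemma \ref{min}, or rather the standard fact that Bruhat order on $W^I$ is generated by its cover relations) that $[1,w]^I$ is graded by length.

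First I would unwind Chevalley's formula. Fix $u \in [1,w]^I$ and $s = s_\alpha$ with $\alpha \in \Delta \setminus \Delta_I$. Then
\begin{equation*}
\sigma_{s_\alpha}\sigma_u = \sum_{\beta} \omega_\alpha(\beta^\vee)\,\sigma_{(us_\beta)^{\min}},
\end{equation*}
the sum over positive roots $\beta$ with $\ell((us_\beta)^{\min}) = \ell(u)+1$; by the Remark following Chevalley's formula, each such term in fact has $us_\beta > u$, hence $(us_\beta)^{\min} = us_\beta > u$ is a Bruhat cover of $u$ inside $W^I$. Moreover $\omega_\alpha(\beta^\vee) > 0$ whenever it is nonzero, so there is no cancellation and $\Sigma(\sigma_{s_\alpha}\sigma_u)$ is literally the set of those $\sigma_{us_\beta}$ with $\omega_\alpha(\beta^\vee) \neq 0$. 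This gives the easy direction: if $\sigma_u \prec \sigma_v$ is a generating relation, then $v$ covers $u$ in $[1,w]^I$, so $i^{-1}$ is order-preserving. For the converse — the direction I expect to be the main obstacle — I must show that \emph{every} cover $u \lessdot v$ in $[1,w]^I$ actually appears: there is some $\alpha \in \Delta\setminus\Delta_I$ and some positive $\beta$ with $v = us_\beta$ and $\omega_\alpha(\beta^\vee) \neq 0$. Write $v = us_\beta$ with $\beta \in R^+$ (possible since $v$ covers $u$). Since $v \in W^I$ and $\ell(v) > \ell(u)$, I want to locate a simple $\alpha$ not in $\Delta_I$ pairing nontrivially with $\beta^\vee$. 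Expand $\beta^\vee = \sum_{\gamma \in \Delta} c_\gamma \gamma^\vee$ with all $c_\gamma \geq 0$ and not all zero; then $\omega_\alpha(\beta^\vee) = c_\alpha$, so I need some $\alpha \in \Delta \setminus \Delta_I$ with $c_\alpha > 0$, i.e.\ $\beta^\vee \notin \mathrm{span}(\Delta_I^\vee)$, equivalently $\beta \notin R_I$. This is exactly where $u,v \in W^I$ enters: the inversion set of $v^{-1}$ avoids $R_I^+$ by Lemma \ref{inv}, and $\beta$ (up to sign) lies in the symmetric difference of the inversion sets attached to $u$ and $v$; one checks that a cover $u \lessdot v$ with both in $W^I$ forces the associated reflection's root to lie outside $R_I$.

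So the core lemma to isolate and prove is: \emph{if $u, v \in W^I$ with $v = us_\beta$, $\beta \in R^+$, and $\ell(v) = \ell(u)+1$, then $\beta \notin R_I$.} I would argue this by contradiction: if $\beta \in R_I^+$ then $us_\beta$ and $u$ lie in the same coset $uW_I$, so $v^{\min} = u^{\min} = u$ (as $u \in W^I$), contradicting $v \in W^I$ with $v \neq u$ — here one uses that the minimal coset representative is unique and $v^{\min} = (us_\beta)^{\min}$. Granting this, pick any $\alpha \in \Delta\setminus\Delta_I$ with $c_\alpha > 0$ in the expansion of $\beta^\vee$; then $\ell((us_\beta)^{\min}) = \ell(v) = \ell(u)+1$, so the term $\sigma_v$ occurs in $\sigma_{s_\alpha}\sigma_u$ with coefficient $\omega_\alpha(\beta^\vee) = c_\alpha > 0$, witnessing $\sigma_u \prec \sigma_v$. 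Finally, since both $[1,w]^I$ and $\Sigma(w,A,I)$ are ranked (by $\ell$ and by $\tfrac12\deg$ respectively) and $i$ matches the ranks and the cover relations in both directions, $i$ is a poset isomorphism. A small point to dispatch along the way: one should confirm that the covers produced all stay inside the interval $[1,w]^I$, which is automatic since $\sigma_v \in H^*(X(w,A,I))$ forces $v \in [1,w]^I$, and conversely any cover $u \lessdot v$ with $v \le w$ has $v \in [1,w]^I$ by definition.
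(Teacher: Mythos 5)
Your proof follows the same route as the paper: reduce to covering relations, and use Chevalley's formula in both directions, with the key observation that a cover $u \lessdot v = us_\beta$ inside $W^I$ forces $\beta \notin R_I$, so some fundamental weight $\omega_\alpha$ with $\alpha \in \Delta \setminus \Delta_I$ pairs nontrivially with $\beta^\vee$. You in fact supply more detail than the paper (an explicit proof that $\beta \notin R_I$, and the positivity/no-cancellation remark), and the argument is correct; the only negligible slip is the unneeded claim $(us_\beta)^{\min} = us_\beta$, which is not required since order-preservation of $v \mapsto v^{\min}$ (Lemma \ref{min}) already gives $(us_\beta)^{\min} > u$.
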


\begin{proof}
It remains to prove that, for $u, v \in [1, w]^I$, $u < v$ if and only if $\sigma_u \prec \sigma_v$, and it suffices to consider the covering relations. If $u < v$ and $\ell(v) = \ell(u) + 1$, then $v = u s_\beta$ for some positive root $\beta$. Since $u, v \in W^I$, $\beta \notin R_I$. Hence there exists a simple root $\alpha \in \Delta \setminus \Delta_I$ such that $\omega_\alpha(\beta^\vee) \neq 0$. It follows that $\sigma_v \in \Sigma(\sigma_{s_\alpha} \sigma_u)$.

Conversely, if $\sigma_v \in \Sigma(\sigma_{s} \sigma_u)$ for some $\sigma_s \in \Sigma_1(w, A, I)$, then $v = (u s_\beta)^{\min}$ for some positive root $\beta$. By the remark below \cite[Lemma 4.8]{RS16}, we have $u s_\beta > u$, and it follows that $u < v$.
\end{proof}

\begin{definition}
    Given a subset $J'$ of $\Sigma_1(w, A, I)$, let $H^{J'}$ be the subring of $H^*(X(w, A, I))$ generated by
    \begin{equation*}
        \sigma_s \in \Sigma_1(w, A, I) \setminus J',
    \end{equation*}
    and let
    \begin{equation*}
        \Sigma(w, A, I)^{J'} = \bigcup_{\sigma \in H^{J'}} \Sigma(\sigma).
    \end{equation*}
\end{definition}

We define the \emph{inversion set} of $v \in W$ to be $I(v) = R^+ \cap v(R^-)$.

\begin{lemma}\label{^J}
Suppose that there is a subset $J \subset S \setminus I$. Then the poset isomorphism $i: [1, w]^I \to \Sigma(w, A, I)$ restricts to another poset isomorphism
\begin{equation*}
i^J: [1, w]^{I \cup J} \to \Sigma(w, A, I)^{i(J)}.
\end{equation*}
\end{lemma}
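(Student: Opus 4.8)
The plan is to prove the set equality $i([1,w]^{I\cup J}) = \Sigma(w,A,I)^{i(J)}$; since $W^{I\cup J}\subseteq W^I$ realizes $[1,w]^{I\cup J}$ as a sub-poset of $[1,w]^I$ and $i$ is already a poset isomorphism by Lemma \ref{^I}, its restriction $i^J$ is then automatically a poset isomorphism onto its image. Put $N = S\setminus(I\cup J)$; using $S = S(w)$ and the identification $S(w)\setminus I\to\Sigma_1(w,A,I)$, the ring $H^{i(J)}$ is generated by $\{\sigma_s : s\in N\}$, so every element of $H^{i(J)}$ is a $\mathbb{Z}$-combination of monomials $M = \sigma_{s_1}\cdots\sigma_{s_m}$ with all $s_i\in N$, whence $\Sigma(w,A,I)^{i(J)} = \bigcup_M \Sigma(M)$. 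Two facts will be used throughout. First, by Chevalley's formula and induction on $m$, every such $M$ expands in the Schubert basis with nonnegative coefficients (because $\omega_\alpha(\beta^\vee)\geq 0$ for $\beta\in R^+$); second, $\omega_{\alpha_s}(\beta^\vee)>0$ precisely when $s$ lies in the support of $\beta$, i.e.\ $\alpha_s$ occurs in $\beta$. The heart of the argument is the claim $(\star)$: if $u\in W^{I\cup J}$, if $v := (us_\beta)^{\min}$ covers $u$ in $W^I$, and if the support of $\beta$ is not contained in $I\cup J$, then $v\in W^{I\cup J}$.

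To prove $(\star)$, set $v' := v^{\min, I\cup J}$. The map $x\mapsto x^{\min, I\cup J}$ is order-preserving (Lemma \ref{min}) and fixes $u$, so $u\leq v'\leq v$ in $W^I$; since $v$ covers $u$ there, either $v' = v$, giving $v\in W^{I\cup J}$ as wanted, or $v' = u$. In the latter case $v\in uW_{I\cup J}$, and minimality of $u$ in that coset forces $\ell(u^{-1}v) = \ell(v)-\ell(u) = 1$, so $v = uy$ for a simple reflection $y\in I\cup J$; writing $us_\beta = vz$ with $z\in W_I$ (legitimate since $(us_\beta)^{\min}=v$) we get $s_\beta = yz\in W_{I\cup J}$, so $\beta\in R_{I\cup J}$, contradicting the hypothesis on its support. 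Granting $(\star)$, the inclusion $\Sigma(w,A,I)^{i(J)}\subseteq i([1,w]^{I\cup J})$ follows by showing $\Sigma(M)\subseteq i([1,w]^{I\cup J})$ for each monomial $M = \sigma_{s_1}\cdots\sigma_{s_m}$ ($s_i\in N$), by induction on $m$, the case $m=0$ being immediate. Writing $M = \sigma_{s_1}M''$ with $M'' = \sum_u a_u\sigma_u$ and $u$ ranging over $[1,w]^{I\cup J}$ by induction, Chevalley's formula shows every $\sigma_v\in\Sigma(M)$ has $v = (us_\beta)^{\min}$ for some $u$ with $a_u\neq0$, some $\beta\in R^+$ with $\ell(v) = \ell(u)+1$ and $s_1$ in the support of $\beta$; by the proof of Lemma \ref{^I}, $u<v$ in $W^I$, so $v$ covers $u$ there, and $(\star)$ applies since $s_1\in N$; finally $v\leq w$ because $\sigma_v$ indexes a class in $H^*(X(w,A,I))$.

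For the reverse inclusion $i([1,w]^{I\cup J})\subseteq\Sigma(w,A,I)^{i(J)}$, I would show by induction on $\ell(v)$ that each $v\in[1,w]^{I\cup J}$ satisfies $\sigma_v\in\Sigma(M)$ for some monomial $M$ in the $\sigma_s$, $s\in N$ ($v=1$ being trivial). For $\ell(v)\geq1$, pick $u$ covered by $v$ in the poset $[1,w]^{I\cup J}$, which is graded by length; by Lemma \ref{B ord}(b) and length-counting, $v = us_\gamma$ for some $\gamma\in R^+$, and the support of $\gamma$ is not contained in $I\cup J$ (otherwise $s_\gamma\in W_{I\cup J}$, forcing $v^{\min, I\cup J} = u^{\min, I\cup J} = u$, impossible as $v\in W^{I\cup J}$ and $v\neq u$). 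Choosing $s\in N$ in that support, Chevalley's formula produces $\sigma_v$ with positive coefficient $\omega_{\alpha_s}(\gamma^\vee)$ in $\sigma_s\sigma_u$ (using $(us_\gamma)^{\min}=us_\gamma=v$ of length $\ell(u)+1$); combining with an inductive monomial $M'$ for $\sigma_u$ and the nonnegativity recorded above, no cancellation occurs in $\sigma_s M'$, so $\sigma_v\in\Sigma(\sigma_s M')$, completing the induction. I expect $(\star)$ to be the principal obstacle: in the partial-flag setting Chevalley's formula involves non-simple roots $\beta$ and the coset minimization $(us_\beta)^{\min}$, so it is not transparent that multiplication by the classes $\sigma_s$ with $s\notin I\cup J$ keeps one inside $[1,w]^{I\cup J}$, and what unlocks it is the observation that the cover $v$ of $u$ in $W^I$ is sent by $x\mapsto x^{\min, I\cup J}$ either to $u$ or to $v$.
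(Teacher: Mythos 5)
Your proof is correct. Its overall architecture is the same as the paper's --- both inclusions are proved by induction, using Chevalley's formula together with the nonnegativity of its coefficients --- but the two key steps are carried out by genuinely different means. For the inclusion $\Sigma(w,A,I)^{i(J)}\subseteq i\bigl([1,w]^{I\cup J}\bigr)$, the paper argues by contradiction from a reduced word of $v$ ending in $I\cup J$, an argument that really only addresses the case $us_\beta=v$ with $\beta$ simple; your claim $(\star)$, which rests on the order-preserving property of $x\mapsto x^{\min}$ for the coset space $W/W_{I\cup J}$ (Lemma \ref{min}) together with the fact that a reflection lying in $W_{I\cup J}$ has its root in $R_{I\cup J}$, handles the general case $v=(us_\beta)^{\min}$ with $\beta$ an arbitrary positive root, which is the subtle point in the partial-flag setting. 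For the reverse inclusion, the paper inducts on a left descent $v=s_1u$ and produces the required root as an element of $I(v^{-1})\subseteq R^+\setminus R^+_{I\cup J}$ via Lemma \ref{inv}, whereas you pass through a cover $u<v$ in $W^{I\cup J}$ and argue directly that the corresponding reflection's root has support meeting $S\setminus(I\cup J)$. Both routes are valid; yours gives a more uniform treatment of the minimization $(\cdot)^{\min}$ and makes the no-cancellation point explicit, at the modest cost of invoking the gradedness of $W^{I\cup J}$ by length (standard; see \cite[Theorem 2.5.5]{BB05}), which the paper's left-descent induction avoids.
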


\begin{proof}
It suffices to prove that $i^J$ is a bijection. First, we show that $\mbox{im } i^J \subset \Sigma(w, A, I)^{i(J)}$ by induction on length. Suppose that $v \in [1, w]^{I \cup J}$ and $v = s_1 \cdots s_k$ is a reduced word. Let $u = s_1 v = s_2 \cdots s_k \in [1, w]^{I \cup J}$. Then since $\ell(u) < \ell(v)$, by induction, $\sigma_u \in \Sigma(w, A, I)^{i(J)}$. By \cite[Lemma 1.6]{Hum92}, we conclude that $u^{-1}(\alpha_{s_1}) \in R^+$. Then
\begin{equation*}
u^{-1}(\alpha_{s_1}) = v^{-1} s_1(\alpha_{s_1}) = v^{-1}(- \alpha_{s_1}) \in I(v^{-1}).
\end{equation*}
Since $v \in W^{I \cup J}$, by \cite[Lemma 1.6]{Hum92} again we have $I(v^{-1}) \subset R^+ \setminus R_{I \cup J}^+$. Thus, there exists a simple root $\alpha \in \Delta \setminus \Delta_{I \cup J}$, such that $\omega_\alpha(u^{-1}(\alpha_{s_1}^\vee)) \neq 0$. Then
\begin{equation*}
\sigma_v \in \Sigma(\sigma_{s_\alpha} \sigma_u) \subset \Sigma(w, A, I)^{i(J)}
\end{equation*}
since $u^{-1} v = u^{-1} s_1 u$ is the reflection corresponding to the positive root $u^{-1}(\alpha_{s_1})$. Hence the inclusion $\mbox{im } i^J \subset \Sigma(w, A, I)^{i(J)}$ holds.

Conversely, suppose that $u \in [1, w]^{I \cup J}$, $\alpha \in \Delta \setminus \Delta_{I \cup J}$ and $\sigma_v \in \Sigma(\sigma_{s_\alpha} \sigma_u)$. We need to show that $v \in W^{I \cup J}$. Then the lemma follows from the similar induction process. If $v \notin W^{I \cup J}$, then there exists a reduced word $v = s_1 \cdots s_k$ with $s_k \in {I \cup J}$. Since $u < v$ and $u \in W^{I \cup J}$, $u$ must have the form $u = s_1 \cdots s_{k-1}$. But $\omega_\alpha(\alpha_{s_k}^\vee) = 0$, which contradicts to the assumption $\sigma_v \in \Sigma(\sigma_{s_\alpha} \sigma_u)$.
\end{proof}

Now we consider the right descent set. We define the \emph{right descent set} of $v \in W$ to be 
\begin{equation*}
D_R(v) = \{ s \in S|\ell(vs) = \ell(v) - 1 \} = \{ s \in S|v \notin W^{\{ s \}} \}.
\end{equation*}
Similarly, we define the \emph{right descent set} of $\sigma_v$ to be
\begin{equation*}
D_R(\sigma_v) = \left\{ \sigma_s \in \Sigma_1(w, A, I)|\sigma_v \notin \Sigma(w, A, I)^{\{ \sigma_s \}} \right\}.
\end{equation*}

\begin{lemma}
Suppose that $v \in [1, w]^I$. Then the poset isomorphism $i: [1, w]^I \to \Sigma(w, A, I)$ restricts to a bijection $i_v: D_R(v) \to D_R(\sigma_v)$.
\end{lemma}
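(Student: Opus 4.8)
The plan is to deduce the statement almost entirely from Lemma~\ref{^J}, specialized to a one-element set $J$. First I would record the domain issue: since $v \in W^I$, no element of $I$ is a right descent of $v$, so $D_R(v) \subseteq S(v) \setminus I \subseteq S(w) \setminus I$ (using the standing assumption $S = S(w)$). Hence $i(D_R(v))$ is a well-defined subset of $\Sigma_1(w, A, I)$ via the restricted correspondence $i \colon S(w) \setminus I \to \Sigma_1(w, A, I)$, and $i_v := i|_{D_R(v)}$ is injective because $i$ is. It therefore remains to prove the set equality $i(D_R(v)) = D_R(\sigma_v)$, i.e. that for each fixed $s \in S(w) \setminus I$ we have $s \in D_R(v)$ if and only if $\sigma_s \in D_R(\sigma_v)$.

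Next I would apply Lemma~\ref{^J} with $J = \{s\}$, which is legitimate since $\{s\} \subseteq S \setminus I$. Unwinding the definition of $\Sigma(w, A, I)^{J'}$ at $J' = i(\{s\}) = \{\sigma_s\}$, the subring $H^{\{\sigma_s\}}$ is exactly the one generated by the $\sigma_t$ with $t \in S(w) \setminus I$ and $t \neq s$, so $\Sigma(w, A, I)^{\{\sigma_s\}}$ is precisely the set appearing in the definition of $D_R(\sigma_v)$. Lemma~\ref{^J} gives a poset isomorphism $i^{\{s\}} \colon [1, w]^{I \cup \{s\}} \to \Sigma(w, A, I)^{\{\sigma_s\}}$ that is a restriction of $i$; since $i$ is a bijection, this yields $\sigma_v \in \Sigma(w, A, I)^{\{\sigma_s\}}$ if and only if $v \in [1, w]^{I \cup \{s\}}$. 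As $v \le w$ already, the latter is equivalent to $v \in W^{I \cup \{s\}}$.

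The final step is the standard translation between minimal coset representatives and descents. Using the characterization $w \in W^{\{t\}} \iff t \notin D_R(w)$ recalled in Section~\ref{pre} (equivalently, $\ell(wt) > \ell(w)$, via Lemma~\ref{root}), together with $W^{I \cup \{s\}} = W^I \cap W^{\{s\}}$, and the fact that $v \in W^I$ holds by hypothesis, I get $v \in W^{I \cup \{s\}}$ if and only if $s \notin D_R(v)$. Chaining the equivalences: $\sigma_v \in \Sigma(w, A, I)^{\{\sigma_s\}} \iff s \notin D_R(v)$, and negating gives $\sigma_s \in D_R(\sigma_v) \iff s \in D_R(v)$, which is exactly what is needed; this proves $i(D_R(v)) = D_R(\sigma_v)$ and hence that $i_v$ is a bijection.

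I do not expect a serious obstacle here, since the substantive combinatorics is already packaged in Lemma~\ref{^J}. The only points requiring care are bookkeeping ones: checking that the intrinsic definition of $D_R(\sigma_v)$ really is the $J' = \{\sigma_s\}$ instance of $\Sigma(w, A, I)^{J'}$ (a definition chase), confirming $D_R(v) \subseteq S(w) \setminus I$ so that $i$ may be applied to it, and invoking the elementary identity $W^{I \cup \{s\}} = W^I \cap W^{\{s\}}$ so that membership in $W^{I\cup\{s\}}$ is governed solely by the single extra descent condition at $s$.
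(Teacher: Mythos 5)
Your argument is correct and is essentially identical to the paper's proof: both reduce the claim to Lemma~\ref{^J} with $J=\{s\}$, using the equivalence $s\in D_R(v)\iff v\notin W^{\{s\}}$ and, since $v\in W^I$, the further equivalence with $v\notin W^{I\cup\{s\}}$. Your write-up just makes the bookkeeping (the definition chase for $\Sigma(w,A,I)^{\{\sigma_s\}}$ and the identity $W^{I\cup\{s\}}=W^I\cap W^{\{s\}}$) more explicit than the paper does.
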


\begin{proof}
Since $v \in W^I$, $s \in D_R(v)$ if and only if $v \notin W^{I \cup \{ s \}}$. Now the result follows from Lemma \ref{^J}.
\end{proof}

\begin{lemma}\label{not exist}
Suppose that $\sigma_v \in \Sigma(w, A, I)$ and $\sigma_s \in D_R(\sigma_v)$. Then there exists a unique maximal element $\sigma_u \prec \sigma_v$ such that $\sigma_u \in \Sigma(w, A, I)^{\{ \sigma_s \}}$.
\end{lemma}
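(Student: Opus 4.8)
The plan is to translate the statement back into the Bruhat order on $[1,w]^I$ via the poset isomorphism $i$ from Lemma \ref{^I}, prove the combinatorial statement there, and transport it back. Under $i$ and the bijection $i_v \colon D_R(v) \to D_R(\sigma_v)$, the element $\sigma_v$ corresponds to some $v \in [1,w]^I$ and $\sigma_t$ corresponds to a simple reflection $t \in D_R(v)$; moreover by Lemma \ref{^J} the set $\Sigma(w,A,I)^{\{\sigma_t\}}$ corresponds to $[1,w]^{I\cup\{t\}}$. So the claim becomes: there is a unique maximal element $u \in [1,w]^{I\cup\{t\}}$ with $u < v$. First I would exhibit the candidate: since $t \in D_R(v)$, we have $v t < v$, and I claim $u := (vt)^{\min} = vt$, using that $v \in W^I$ and $t \notin I$ (because $\sigma_t \in \Sigma_1$ corresponds to $S(w)\setminus I$), together with the standard fact that for $v \in W^I$ and $t \in D_R(v)\setminus I$ one has $vt \in W^I$ as well. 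Then I would check $vt \in W^{I\cup\{t\}}$: indeed $t \notin D_R(vt)$ since $\ell(vtt) = \ell(v) > \ell(vt)$, and $vt \in W^I$, so $vt \in W^{I\cup\{t\}}$.

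Next comes the maximality and uniqueness, which is the heart of the argument. Suppose $u' \in [1,w]^{I\cup\{t\}}$ with $u' < v$. I want $u' \le vt$. The key point is that any reduced word for $v$ can be taken to end in $t$ (since $t \in D_R(v)$), say $v = s_1\cdots s_{k-1} t$ with $s_1\cdots s_{k-1} = vt$ reduced. Since $u' < v$, by the subword characterization (Lemma \ref{B ord}(a)) $u'$ is obtained as a subword $s_{i_1}\cdots s_{i_l}$ of $s_1\cdots s_{k-1}t$. If this subword does not use the final letter $t$, then $u'$ is a subword of $s_1\cdots s_{k-1} = vt$, hence $u' \le vt$ and we are done. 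If it does use the final $t$, then $u' = (\text{subword of } s_1\cdots s_{k-1}) \cdot t$; since $u' \in W^{I\cup\{t\}}$ we must have $\ell(u't) > \ell(u')$, but $u' = (\cdots) t$ forces $t \in D_R(u')$ unless the subword $(\cdots)$ absorbs the $t$, i.e. $u'$ also equals a subword of $s_1\cdots s_{k-1}$ not ending in $t$ — more carefully, one shows $u't \le s_1 \cdots s_{k-1} = vt$ and $u' \le u't$, giving again $u' \le vt$. So in all cases $u' \le vt$, proving $vt$ is the unique maximal element below $\sigma_v$ in $\Sigma(w,A,I)^{\{\sigma_t\}}$. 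Transporting back through $i$ gives $\sigma_u = \sigma_{vt}$ as the desired element.

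I expect the main obstacle to be the case analysis in the previous paragraph — specifically, making the "subword ending in $t$" argument airtight, since one must be careful that the subword realization of $u'$ inside a reduced word of $v$ ending in $t$ can always be chosen so as to either avoid the last letter or be completed to a subword $u't$ that still lies inside $s_1\cdots s_{k-1}$. The cleanest route is probably the well-known lemma that for $v \in W^{I\cup\{t\}}$... wait, $v$ need not lie in $W^{I\cup\{t\}}$ since $t\in D_R(v)$; rather one uses: if $u' \in W^{\{t\}}$ and $u' \le v$ with $t \in D_R(v)$, then $u' \le vt$. This is a standard consequence of the lifting property of the Bruhat order (see \cite[Proposition 2.2.7]{BB05} in spirit), applied with the parabolic $W_{\{t\}}$, and then intersecting with the analogous statement for $W_I$ to land in $W^{I\cup\{t\}}$; I would cite or quickly reprove this lifting fact and then the maximality is immediate. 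Everything else — identifying the candidate, verifying it lies in the right coset space, and the transport via $i$, $i_v$, and Lemma \ref{^J} — is routine.
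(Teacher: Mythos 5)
There is a genuine gap: you misidentify the maximal element. You claim the candidate is $u = vt$, justified by a ``standard fact'' that for $v \in W^I$ and $t \in D_R(v)\setminus I$ one has $vt \in W^I$. This is false. Take $W = S_3$ with $S = \{s_1, s_2\}$, $I = \{s_1\}$, $v = s_1 s_2 \in W^{\{s_1\}}$, $t = s_2$. Then $vt = s_1$, and $\ell(s_1 \cdot s_1) = 0 < 1$, so $vt \notin W^{\{s_1\}}$; a fortiori $vt \notin W^{I \cup \{t\}} = W^{S} = \{1\}$, and $\sigma_{vt}$ is not even a Schubert basis element of $X(w,A,I)$. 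The correct candidate is $u_0 = v^{\min}$, the minimal representative of the coset $vW_{I\cup\{t\}}$, which in this example is $1$, not $s_1$. The gap is not cosmetic: the lemma is applied later (e.g.\ in Proposition \ref{not} with $v = s't'$, $s' \in I'$, where the answer is $u' = 1$ with a degree drop of $4$, and in the construction of reduced words where $\deg\sigma_v - \deg\sigma_u = 2n$ with $n$ possibly greater than $1$) precisely in situations where $u \neq vt$.

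Your overall strategy of transporting the statement to $[1,w]^{I\cup\{t\}}$ via Lemma \ref{^J} matches the paper, but the maximality argument is also more complicated than necessary. Once the candidate is corrected to $u_0 = v^{\min}$ (taken with respect to $W_{I\cup\{t\}}$), maximality is immediate from Lemma \ref{min}: any $u \in [1,w]^{I\cup\{t\}}$ with $u < v$ satisfies $u = u^{\min} \leq v^{\min} = u_0$, and $u_0 < v$ because $t \in D_R(v)$ forces $v \notin W^{I\cup\{t\}}$. This replaces your subword/lifting case analysis entirely; the lifting property you invoke would in any case only bound $u'$ by $vt$, after which you would still need to descend to $(vt)^{\min} = v^{\min}$ to land in $W^{I\cup\{t\}}$.
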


\begin{proof}
Under the identification $i$, $u$ is the maximal element satisfies $u < v$ and $u \in W^{I \cup \{ s \}}$. In other words, $u$ is the minimal representative of $v$ in the coset of $W/W_{I \cup \{ s \}}$. Hence $u$ exists and is unique, and so does and is $\sigma_u$.
\end{proof}

We obtain the following proposition as a corollary, which gives a necessary condition.

\begin{proposition}\label{not}
If $S(w) \cap I$ is empty but $S(w') \cap I'$ is not, then $X(w, A, I)$ and $X(w', A', I')$ are not isomorphic as algebraic varieties.
\end{proposition}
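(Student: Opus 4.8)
The plan is to extract from the cohomology ring and the reconstructions of this section a combinatorial property that is preserved by any algebraic isomorphism of Schubert varieties and that separates the two cases. Suppose $f\colon X(w,A,I)\to X(w',A',I')$ is an isomorphism. By Lemma \ref{basis} the induced $\varphi^*$ is a graded ring isomorphism carrying $\Sigma(w,A,I)$ onto $\Sigma(w',A',I')$ and $\Sigma_1(w,A,I)$ onto $\Sigma_1(w',A',I')$; consequently $\varphi^*$ preserves the relation $\prec$, the subrings $H^{J'}$, the subsets $\Sigma(w,A,I)^{J'}$, the sets $D_R(\sigma_v)$, and cohomological degree, so that the assignment $(\sigma_v,\sigma_t)\mapsto\sigma_u$ of Lemma \ref{not exist} is intertwined by $\varphi^*$. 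It therefore suffices to show that the following property $(\ast)$ holds for $X(w',A',I')$ but fails for $X(w,A,I)$: \emph{there are $\sigma_v\in\Sigma$ and $\sigma_t\in D_R(\sigma_v)$ for which the unique maximal $\sigma_u\prec\sigma_v$ with $\sigma_u\in\Sigma^{\{\sigma_t\}}$ satisfies $\deg\sigma_v-\deg\sigma_u\geq 4$.}

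First I would verify that $(\ast)$ fails when $S(w)\cap I=\varnothing$. By Lemma \ref{min emb} we may assume $S=S(w)$, hence $I=\varnothing$. For any $\sigma_v\in\Sigma(w,A,I)$ and $\sigma_t\in D_R(\sigma_v)$, Lemma \ref{not exist} identifies the associated $\sigma_u$ with $\sigma_{u_0}$, where $u_0$ is the minimal representative of $v$ in $W/W_{\{t\}}$; since $W_{\{t\}}=\{e,t\}$ and $t\in D_R(v)$ we get $u_0=vt$ with $\ell(vt)=\ell(v)-1$, so $\deg\sigma_v-\deg\sigma_{u_0}=2<4$.

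Next I would establish $(\ast)$ for $X(w',A',I')$ when $S(w')\cap I'\neq\varnothing$. Fix $s'\in S(w')\cap I'$ and let $v'_0\in[1,w']^{I'}$ be an element of minimal length such that $s'\in S(v'_0)$; the set is nonempty since $w'$ qualifies, and $v'_0\neq e$ because $s'\leq v'_0$. Pick a reduced word $v'_0=r_1\cdots r_k$ and set $t'=r_k\in D_R(v'_0)$. Since $v'_0\in W'^{I'}$ we have $t'\notin I'$, so $s'\neq t'$; as deleting the last letter of a reduced word removes at most that letter from the support, $s'\in S(v'_0t')$. If $v'_0t'$ lay in $W'^{I'}$, then $v'_0t'\in[1,w']^{I'}$ would support $s'$ and have length $\ell(v'_0)-1$, contradicting the minimality of $v'_0$; hence $v'_0t'\notin W'^{I'}$, and a fortiori $v'_0t'\notin W'^{I'\cup\{t'\}}$. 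Since $v'_0t'$ lies in the coset $v'_0W'_{I'\cup\{t'\}}$ and is not its minimal representative, the minimal representative $u'$ of $v'_0$ in $W'/W'_{I'\cup\{t'\}}$ satisfies $\ell(u')<\ell(v'_0t')=\ell(v'_0)-1$. By Lemma \ref{not exist}, $\sigma_{u'}$ is the unique maximal element $\prec\sigma_{v'_0}$ lying in $\Sigma(w',A',I')^{\{\sigma_{t'}\}}$, and $\deg\sigma_{v'_0}-\deg\sigma_{u'}=2(\ell(v'_0)-\ell(u'))\geq 4$. So $(\ast)$ holds for $X(w',A',I')$, which together with the previous paragraph contradicts $X(w,A,I)\cong X(w',A',I')$.

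I expect the crux to be the middle of the last paragraph — producing the witnessing pair $(v'_0,t')$. One cannot simply take $v'_0=w'$, because $w't'$ may already lie in $W'^{I'}$, in which case the length would drop by only one; choosing $v'_0$ of minimal support length is exactly what forces passing to $W'/W'_{I'\cup\{t'\}}$ to also collapse an $I'$-reflection, dropping the length by at least two. Everything else — transporting the reconstructed structure through $\varphi^*$ and the length/coset bookkeeping — is routine given Lemma \ref{basis}, Lemma \ref{not exist}, and the standing reduction via Lemma \ref{min emb}.
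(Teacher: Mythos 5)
Your proof is correct and uses essentially the same mechanism as the paper: both arguments transport the ``unique maximal $\sigma_u\prec\sigma_v$ in $\Sigma^{\{\sigma_t\}}$'' of Lemma \ref{not exist} through the isomorphism of Lemma \ref{basis} and derive a contradiction from the degree drop $\deg\sigma_v-\deg\sigma_u$, which is always $2$ when $I=\varnothing$ but is $\geq 4$ for a suitable witness when $S(w')\cap I'\neq\varnothing$. The only difference is the choice of witness --- you take a minimal-length $v_0'$ supporting some $s'\in I'$, whereas the paper explicitly constructs a length-two element $s't'$ with $s'\in I'$ from a reduced word of $w'$ --- and both choices work.
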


\begin{proof}
Using \cite[Lemma 4.8]{RS16}, we can assume that $w, w'$ are fully supported. Hence $I = \varnothing \neq I'$. We can find two simple reflections $s', t' \in S'$ such that $s't' \in (1, w']^{I'}$ and $s' \in I$. In fact, taking a reduced word $w' = s'_1 \cdots s'_k$, we let $s' = s'_i$, where $i$ is largest integer such that $s'_i \in I'$, and $t' = s'_j$ with $j > i$ such that $s'_i$ and $s'_j$ cannot commute. Then $s't' = s'_i s'_j \in (1, w']^{I'}$. The integer $j$ exists since otherwise, $s_i$ commutes with all $s'_j$ for all $j > i$, which implies
\begin{equation*}
w' = s'_1 \cdots \widehat{s'_i} \cdots s'_k s'_i \notin W'^{I'},
\end{equation*}
contradicting to the assumption that $w' \in W'^{I'}$.

Suppose that $X(w, A, \varnothing)$ and $X(w', A', I')$ are algebraically isomorphic. Then the set $\Sigma_1(w, A, I)$ (resp. the poset $\Sigma(w, A, I)$) can be identified with the set $\Sigma_1(w', A', I')$ (resp. the poset $\Sigma(w', A', I')$). Hence there is a pair
\begin{equation*}
(\sigma_t, \sigma_{st}) \in \Sigma_1(w, A, I) \times \Sigma(w, A, I)
\end{equation*}
of Schubert basis elements corresponding to the pair
\begin{equation*}
(\sigma_{t'}, \sigma_{s' t'}) \in \Sigma_1(w', A', I') \times \Sigma(w', A', I').
\end{equation*}
Consider the unique maximal element $\sigma_u \prec \sigma_{st}$ (resp. $\sigma_{u'} \prec \sigma_{s' t'}$) such that $\sigma_u \in \Sigma(w, A, I)^{\{ \sigma_t \}}$ (resp. $\sigma_{u'} \in \Sigma(w', A', I')^{\{ \sigma_{t'} \}}$), which exists by Lemma \ref{not exist}. But $u = s$ and $u' = 1$, so
\begin{equation*}
\deg \sigma_u = 2 \neq 0 = \deg \sigma_{u'},
\end{equation*}
which is a contradiction.
\end{proof}

For the purpose of this section, we compute some entries of the Cartan matrix from the cohomology ring.

\begin{lemma}\label{C1}
Let $s \in S$ and $t \in S \setminus I$ with $s \neq t$. If $st \in [t, w]^I$, then the coefficient of $\sigma_{st}$ in $\sigma_t^2$ is $- a_{st}$.
\end{lemma}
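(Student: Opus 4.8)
The plan is to apply Chevalley's formula directly to the product $\sigma_t \cdot \sigma_t$, where $t \in S \setminus I$, and extract the coefficient of $\sigma_{st}$. By Chevalley's formula (with $\alpha = \alpha_t$ and $w = s_t$, noting $s_t \in W^I$ since $t \notin I$), we have
\begin{equation*}
\sigma_t^2 = \sigma_{s_t} \sigma_{s_t} = \sum_\beta \omega_t(\beta^\vee) \, \sigma_{(s_t s_\beta)^{\min}},
\end{equation*}
the sum over positive roots $\beta$ with $\ell((s_t s_\beta)^{\min}) = \ell(s_t) + 1 = 2$. So I need to identify which such $\beta$ give $(s_t s_\beta)^{\min} = st$ in $W^I$, and sum the corresponding coefficients $\omega_t(\beta^\vee)$.

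First I would argue that the only positive root $\beta$ contributing a term equal to $\sigma_{st}$ is $\beta = \alpha_s$ (so that $s_t s_\beta = s_t s_s = ts$; but we want $st$, so actually $\beta = \beta_0$ with $s_t s_{\beta_0} = st$, i.e. $s_{\beta_0} = s_t \cdot s \cdot s_t = s_{s_t(\alpha_s)}$, hence $\beta_0 = s_t(\alpha_s)$). I should double-check the bookkeeping here: since we want $(s_t s_\beta)^{\min} = st$ and $st \in W^I$ (so its own minimal representative), we need $s_t s_\beta \in \{st\} W_I$, i.e. $s_\beta = s_t \cdot (st w_I)$ for some $w_I \in W_I$ with length constraints. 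The hypothesis $st \in (t,w]^I$ ensures $st$ is a genuine element of the indexing poset with $\ell(st)=2$ and $st \neq t$, so $s \neq t$ and $s_t s_\beta$ has reduced expression of length $2$. The key point is that among all $\beta$ with $\ell((s_t s_\beta)^{\min}) = 2$, exactly those whose minimal representative is $st$ contribute; I expect (and would verify via the rank-two sub-root-system generated by $\alpha_s, \alpha_t$, using $S = S(w)$ reduction where relevant) that there is a unique such $\beta$, namely $\beta = s_t(\alpha_s) = \alpha_s - a_{ts}\alpha_t$ or its appropriate form, and then $\omega_t(\beta^\vee)$ must be computed.

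The main computational step is evaluating $\omega_t(\beta^\vee)$ for $\beta = s_t(\alpha_s)$. Using $W$-invariance of the pairing and that $s_t$ sends $\omega_t \mapsto \omega_t - \alpha_t$ while it permutes coroots, one gets $\omega_t(\beta^\vee) = \omega_t(s_t(\alpha_s)^\vee) = (s_t \omega_t)(\alpha_s^\vee) = (\omega_t - \alpha_t)(\alpha_s^\vee) = \omega_t(\alpha_s^\vee) - \alpha_t(\alpha_s^\vee) = 0 - a_{st}= -a_{st}$, using $\omega_t(\alpha_s^\vee) = \delta_{st} = 0$ since $s \neq t$. This yields the coefficient $-a_{st}$ as claimed. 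I would present this as a short chain of equalities in a single display, being careful about the convention $a_{st} = \langle \alpha_s^\vee, \alpha_t\rangle$ versus $\langle \alpha_t^\vee, \alpha_s \rangle$ as fixed in the Preliminaries.

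The main obstacle I anticipate is the uniqueness claim: showing no \emph{other} positive root $\beta$ with $\ell((s_t s_\beta)^{\min}) = 2$ has $(s_t s_\beta)^{\min} = st$. A priori several $\beta$ could map into the same coset $st\,W_I$. I would handle this by noting that if $(s_t s_\beta)^{\min} = st$ then $s_t s_\beta = st\, v$ for some $v \in W_I$ with $\ell(st\,v) = \ell(st) + \ell(v) = 2$, forcing $\ell(v) \le 0$, hence $v = e$ and $s_t s_\beta = st$, which pins down $s_\beta$ and thus $\beta$ uniquely among positive roots. One subtlety worth a remark: one must also confirm the term $\sigma_{st}$ genuinely appears (coefficient not vacuously zero), i.e. that $\ell((s_t s_\beta)^{\min}) = \ell(s_t) + 1$ holds for this $\beta$ — but that is immediate since $\ell(st) = 2 = \ell(s_t) + 1$ by the hypothesis $st \in (t,w]^I$ with $st \neq t$.
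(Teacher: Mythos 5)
Your proposal is correct and takes essentially the same route as the paper: apply Chevalley's formula to $\sigma_t\cdot\sigma_t$, identify the contributing root $\beta=t(\alpha_s)$, and compute $\omega_{\alpha_t}(\beta^\vee)=-a_{st}$ (you move the reflection onto $\omega_t$ by $W$-invariance of the pairing, whereas the paper expands $t(\alpha_s^\vee)=\alpha_s^\vee-a_{st}\alpha_t^\vee$ in the coroot basis — the same one-line calculation). Your extra care about uniqueness of the contributing $\beta$ goes beyond what the paper does, though the step ``$\ell(st\,v)=2$'' conflates $\ell(ts_\beta)$ with $\ell((ts_\beta)^{\min})$ and would need a separate argument; this does not affect the main computation.
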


\begin{proof}
By Chevalley's formula, the coefficient of $\sigma_{st}$ in $\sigma_t^2$ is 
\begin{equation*}
\omega_{\alpha_t}(t(\alpha_s^\vee)) = \omega_{\alpha_t}(\alpha_s^\vee - a_{st} \alpha_t^\vee) = - a_{st}.
\end{equation*}
\end{proof}

\begin{lemma}\label{C2}
Let $r, s \in S$ and $t \in S \setminus I$ with $rst \in [1, w]^I$ being a reduced word. Then the coefficient of $\sigma_{rst}$ in $\sigma_t \sigma_{st}$ is $\delta_{rt} - a_{rt} + a_{rs} a_{st}$, where $\delta_{rt} = 1$ if $r = t$, and $\delta_{rt} = 0$ if $r \neq t$.
\end{lemma}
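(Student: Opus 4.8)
The plan is to read the coefficient directly off Chevalley's formula applied to the product $\sigma_t\,\sigma_{st}$. First I would check the product makes sense: since $t \in S\setminus I$ we have $\alpha_t \in \Delta\setminus\Delta_I$; and since $rst$ is a reduced word in $W^I$ with $t \notin I$, the suffix $st$ also lies in $W^I$ — otherwise $s$ would lie in $I$ and commute with $t$, so that $rst = rts$ would be a reduced word ending in a letter of $I$. Combined with $st \le rst \le w$ this gives $st \in [1,w]^I$, so $\sigma_{st} \in \Sigma(w,A,I)$ and Chevalley's formula yields
\[
\sigma_t\,\sigma_{st} \;=\; \sum_{\beta}\,\omega_{\alpha_t}(\beta^\vee)\,\sigma_{((st)s_\beta)^{\min}},
\]
the sum over positive roots $\beta$ with $\ell\big(((st)s_\beta)^{\min}\big) = \ell(st)+1 = 3$. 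A summand contributes to the coefficient of $\sigma_{rst}$ exactly when $((st)s_\beta)^{\min} = rst$, and since $rst \in W^I$ this is equivalent to $(st)s_\beta \in rst\,W_I$; the length condition $\ell(rst)=3$ is then automatic. So the task is to list the positive roots $\beta$ with $(st)s_\beta \in rst\,W_I$ and add up the corresponding values $\omega_{\alpha_t}(\beta^\vee)$.

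The natural candidate is $\beta_0 := (ts)(\alpha_r)$. Since $tsr = (rst)^{-1}$ is reduced, Lemma~\ref{root} gives $(ts)(\alpha_r) \in R^+$, so $\beta_0$ is a genuine positive root; and $s_{\beta_0} = (ts)\,s_r\,(ts)^{-1} = (st)^{-1}rst$, whence $(st)s_{\beta_0} = rst$. The heart of the argument is to show $\beta_0$ is the \emph{only} contributing root. If $\beta$ contributes then $s_\beta \in s_{\beta_0}W_I$; moreover $\beta \notin R_I$, since otherwise $(st)s_\beta \in (st)W_I$, a coset whose minimal representative is $st \ne rst$ — and the same reasoning shows $\beta_0 \notin R_I$. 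Now I would invoke the fact that a right $W_I$-coset of $W$ contains at most one reflection lying outside $W_I$: if $s_\beta W_I = s_{\beta'}W_I$ with $\beta,\beta' \notin R_I$, then $s_\beta s_{\beta'} \in W_I$ fixes $(\mathrm{span}\,R_I)^\perp$ pointwise in the reflection representation, yet fixes exactly $\mathrm{span}\{\beta,\beta'\}^\perp$; hence $\mathrm{span}\{\beta,\beta'\} \subseteq \mathrm{span}\,R_I$, forcing $\beta,\beta' \in R_I$, a contradiction unless $\beta = \beta'$. Therefore the coefficient equals $\omega_{\alpha_t}(\beta_0^\vee)$.

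It remains to compute $\omega_{\alpha_t}(\beta_0^\vee)$. Expanding $\beta_0^\vee = (ts)(\alpha_r^\vee)$ in the basis of simple coroots via $s_u(\alpha_v^\vee) = \alpha_v^\vee - a_{vu}\alpha_u^\vee$ gives $\beta_0^\vee = \alpha_r^\vee - a_{rs}\alpha_s^\vee + (a_{rs}a_{st} - a_{rt})\alpha_t^\vee$, and pairing with $\omega_{\alpha_t}$ (so $\omega_{\alpha_t}(\alpha_u^\vee) = \delta_{tu}$, and $\delta_{ts} = 0$ since $s \ne t$) yields $\delta_{rt} - a_{rt} + a_{rs}a_{st}$, as claimed. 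The only step where I expect real work is the uniqueness claim: a priori Chevalley's formula might produce several positive roots $\beta$ — namely those for which $(st)s_\beta$ is a non-minimal representative of the coset $rst\,W_I$ — all contributing to $\sigma_{rst}$, and the "one reflection per coset" observation is precisely what rules this out. Everything else is a routine application of Chevalley's formula together with the reflection formula for coroots.
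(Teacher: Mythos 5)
Your proof is correct and follows the same route as the paper's: apply Chevalley's formula to $\sigma_t\sigma_{st}$ and evaluate $\omega_{\alpha_t}\left((ts)(\alpha_r^\vee)\right)$, which gives $\delta_{rt} - a_{rt} + a_{rs}a_{st}$. The paper's proof consists of only that weight computation; your additional verifications --- that $st \in [1,w]^I$ so the product is defined, that $\beta_0 = (ts)(\alpha_r)$ is a positive root with $(st)s_{\beta_0} = rst$, and especially that no other positive root contributes to the coefficient of $\sigma_{rst}$ --- correctly supply what the paper leaves implicit.
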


\begin{proof}
By Chevalley's formula, the coefficient of $\sigma_{rst}$ in $\sigma_t \sigma_{st}$ is
\begin{align*}
\omega_{\alpha_t}(ts(\alpha_r^\vee)) & = \omega_{\alpha_t}(t(\alpha_r^\vee - a_{rs} \alpha_s^\vee)) = \omega_{\alpha_t}(\alpha_r^\vee - a_{rs} \alpha_s^\vee - (a_{rt} - a_{rs} a_{st}) \alpha_t^\vee) \\
& = \delta_{rt} - (a_{rt} - a_{rs} a_{st}) = \delta_{rt} - a_{rt} + a_{rs} a_{st}.
\end{align*}
\end{proof}

\subsection{Case $I = \{ s \}$}\label{sp ca}
We further assume that $I = \{ s \}$. We are primarily interested in this situation.

We now define the reduced word of $\sigma_v \in \Sigma(w, A, I)$ corresponding to the reduced word of $v$. Let $\sigma_s$ be a symbol and define
\begin{equation*}
\widetilde{\Sigma}_1(w, A, I) = \Sigma_1(w, A, I) \cup \{ \sigma_s \},
\end{equation*}
and extend the bijection $S \setminus I \to \Sigma_1(w, A, I)$ to a bijection $S \to \widetilde{\Sigma}_1(w, A, I)$ by sending $s$ to the symbol $\sigma_s$. We define the \emph{reduced words} of $\sigma_v \in \Sigma(w, A, I)$ inductively. First, define the reduced word of $\sigma_1 \in H^0(X(w, A, \{ s \}))$ to be the singleton of the empty word. Next, suppose that $\sigma_v \in \Sigma(w, A, I)$ with $\sigma_t \in D_R(\sigma_v)$. Take $\sigma_u$ to be the unique maximal element described in Lemma \ref{not exist}, and assume that $\deg \sigma_v - \deg \sigma_u = 2n$ for $n \in \mathbb{N}$. Then we call the sequence
\begin{equation*}
(\sigma_{s_1}, \cdots, \sigma_{s_k}, \cdots, \sigma_s, \sigma_t, \sigma_s, \sigma_t)
\end{equation*}
a reduced word of $\sigma_v$, where $(\sigma_{s_1}, \cdots, \sigma_{s_k})$ is a reduced word of $\sigma_u$, and the remaining part $(\cdots, \sigma_s, \sigma_t, \sigma_s, \sigma_t)$ of the sequence is an alternating sequence of $\sigma_t$ and $\sigma_s$ of length $n$. We also denote $RW(\sigma_v)$ as the set of reduced words of $\sigma_v$, which is nonempty. We write $RW(u)$ as the set of reduced words of $u \in W$. The following lemma is clear.

\begin{lemma}\label{red}
Suppose that $\sigma_v \in \Sigma(w, A, I)$. Then the bijection $S \to \widetilde{\Sigma}_1$ induces an inclusion
\begin{align*}
RW(\sigma_u) \hookrightarrow RW(u), && (\sigma_{s_1}, \cdots, \sigma_{s_k}) \mapsto (s_1, \cdots, s_k).
\end{align*}
\end{lemma}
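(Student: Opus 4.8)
The plan is to argue by induction on $\ell(v)$, equivalently on $\deg\sigma_v=2\ell(v)$. It suffices to show that the alphabet bijection $\widetilde{\Sigma}_1(w,A,I)\to S$ carries every reduced word of $\sigma_v$ (in the inductive sense defined above) to a reduced word of $v$: injectivity of the resulting map $RW(\sigma_v)\to RW(v)$ is then automatic, since a bijection of finite alphabets induces an injection on the sets of finite words over them, and words of different lengths cannot collide. The base case $\sigma_v=\sigma_1$ is immediate, as $RW(\sigma_1)=RW(1)=\{(\,)\}$.

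For the inductive step, take a reduced word of $\sigma_v$. By construction it comes from a choice of $\sigma_t\in D_R(\sigma_v)$, the unique maximal $\sigma_u\prec\sigma_v$ with $\sigma_u\in\Sigma(w,A,I)^{\{\sigma_t\}}$ (Lemma \ref{not exist}), a reduced word $(\sigma_{s_1},\dots,\sigma_{s_k})$ of $\sigma_u$, and an alternating tail in $\{\sigma_s,\sigma_t\}$ of length $n=(\deg\sigma_v-\deg\sigma_u)/2$; here $n\geq 1$ because $\sigma_v\notin\Sigma(w,A,I)^{\{\sigma_t\}}$ forces $\sigma_u\neq\sigma_v$. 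Since $t\notin I=\{s\}$, I transport everything through the poset isomorphism $i$ of Lemma \ref{^I}: by the proof of Lemma \ref{not exist}, $u$ is the minimal-length representative of the coset $vW_{\{s,t\}}$, so $v=uw_1$ with $w_1\in W_{\{s,t\}}$. The standard factorization of $W$ along the parabolic subgroup $W_{\{s,t\}}$ gives $\ell(v)=\ell(u)+\ell(w_1)$, hence $\ell(w_1)=n$, and shows that the concatenation of a reduced word of $u$ with a reduced word of $w_1$ is a reduced word of $v$. The inductive hypothesis applied to $\sigma_u$ (legitimate since $n\geq 1$ gives $\ell(u)<\ell(v)$) puts $(s_1,\dots,s_k)$ in $RW(u)$, so the whole task reduces to identifying the image of the alternating tail with a reduced word of the dihedral element $w_1$.

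This is where the hypothesis $I=\{s\}$ is used. From $v\in W^{\{s\}}$ we get $s\notin D_R(v)$, and the length-additive factorization gives, for $r\in\{s,t\}$, that $r\in D_R(v)$ iff $r\in D_R(w_1)$; together with $t\in D_R(v)$ — which holds because $\sigma_t\in D_R(\sigma_v)$ and right descent sets of $v$ and $\sigma_v$ correspond via the bijection $i_v:D_R(v)\to D_R(\sigma_v)$ of the lemma before Lemma \ref{not exist} — this forces $D_R(w_1)=\{t\}$. A nontrivial element of a finite dihedral group with a one-element right descent set has a unique reduced word, namely the alternating word of length $n$ in $s,t$ ending in $t$; and this is exactly the word obtained by applying the alphabet bijection to the alternating tail in the chosen reduced word of $\sigma_v$. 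Concatenating with $(s_1,\dots,s_k)$ yields a reduced word of $uw_1=v$, closing the induction.

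I expect the main obstacle to be the bookkeeping around the tail in the inductive step: confirming via the proof of Lemma \ref{not exist} that $\sigma_u$ corresponds to the minimal representative of $v$ modulo $W_{\{s,t\}}$, that the degree drop $2n$ matches $2\ell(u^{-1}v)$, and — the most delicate point — that the orientation of the alternating tail is precisely the one dictated by $v\in W^{\{s\}}$ and $\sigma_t\in D_R(\sigma_v)$, so that its image is the reduced word of $w_1$ itself rather than some other alternating word. Everything else reduces to Lemma \ref{^I}, Lemma \ref{not exist}, and the elementary structure theory of parabolic and dihedral reflection subgroups.
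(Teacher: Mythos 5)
Your proof is correct. The paper offers no argument for this lemma (it is declared ``clear''), and your induction --- transporting $\sigma_u$ through Lemma \ref{^I} to the minimal coset representative of $vW_{\{s,t\}}$, using the length-additive parabolic factorization $v = uw_1$ to match the degree drop $2n$ with $2\ell(w_1)$, and pinning down the tail as the unique reduced word of a dihedral element with $D_R(w_1)=\{t\}$ --- is exactly the intended unpacking of the recursive definition. You are also right that the orientation of the tail is the one delicate point: note that the paper's displayed sequence ends in $\sigma_s$, which would contradict $v \in W^{\{s\}}$; your argument (forcing the alternating word to end in $t$ because $s \notin D_R(v)$ while $t \in D_R(v)$) silently corrects what appears to be a typo in the definition.
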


The above lemmas allow us to prove the following proposition.

\begin{proposition}\label{neces}
Let $A = (a_{st})_{(s, t) \in S^2}$ and $A' = (a'_{s't'})_{(s', t') \in S'^2}$ be two Cartan matrices with associated Weyl groups $W$ and $W'$, and sets of simple reflections $S$ and $S'$, respectively. Let $s \in S$ and $s' \in S'$. Take two fully supported elements $w \in W^{\{ s \}}$ and $w' \in W'^{\{ s' \}}$. If $X(w, A, \{ s \})$ and $X(w', A', \{ s' \})$ are algebraically isomorphic, then $(w, A, \{ s \})$ and $(w', A', \{ s' \})$ are Cartan equivalent.
\end{proposition}

\begin{proof}
To simplify notations, we write $I = \{ s \}$ and $I' = \{ s' \}$. We denote $i'$ as the identification $[1, w']^{I'} \to \Sigma(w', A', I')$. We define $\tau$ to be the composition of bijections
\begin{equation*}
S \stackrel{i}{\longrightarrow} \widetilde{\Sigma}_1(w, A, I) \longrightarrow \widetilde{\Sigma}_1(w', A', I') \stackrel{i'^{-1}}{\longrightarrow} S'.
\end{equation*}
Then $\tau$ identifies the reduced words by Lemma \ref{red}. Hence the first condition of Cartan equivalence is proven.

Now we check that the second condition holds valid.

Suppose that $t_1, t_2 \in S$ such that $t_1 t_2 \leq w$. If $t_1 = t_2$, then $a_{t_1 t_2} = 2 = a'_{\tau(t_1) \tau(t_2)}$. If $t_1 t_2 \in (1, w]^I$, then Lemma \ref{C1} implies that the coefficient of $\sigma_{t_1 t_2}$ and $\sigma_{\tau(t_1) \tau(t_2)}$ in $\sigma_{t_2}^2$ and $\sigma_{\tau(t_2)}^2$ are $- a_{t_1 t_2}$ and $- a'_{\tau(t_1) \tau(t_2)}$, respectively. Hence $a_{t_1 t_2} = a'_{\tau(t_1) \tau(t_2)}$.

Suppose that $t_1 t_2 \notin [1, w]^I$ for the remaining case. Then $t_1 t_2 = rs$ for some $r \in S \setminus I$. Taking a reduced word $w = s_1 \cdots s_k$ corresponding to a reduced word of $\sigma_w$, let $i$ be the maximal integer such that $s_i = s$. We further assume that the reduced word of $w$ is taken to satisfy that $i$ is maximal among all such reduced words. Then $t = s_{i+1} \in S \setminus I$ satisfies $st \neq ts$. If $rst$ is not reduced, then $rst = s$ since $rst \neq r$ or $t$. It follows that $r = t$ and hence $m_{st} = 2$, which contradicts the assumption that $s$ and $t$ cannot commute. Thus, $rst$ is reduced. If $rst \notin W^I$, then the nonreduced word $rsts = rt$ or $st$. The former is impossible since otherwise, we will again obtain that $m_{st} = 2$, which is a contradiction. Hence $rsts = st$. It follows that the simple reflection $r \in W_{\{ s, t \}}$, so $r = t$, and $m_{st} = 3$. Using the correspondence between $RW(\sigma_w)$ and $RW(\sigma_{w'})$, the above argument can be applied to show that $m_{\tau(s) \tau(t)} = 3$, so $a_{ts} = a'_{\tau(t) \tau(s)}$ since $a_{st} = a'_{\tau(s) \tau(t)}$ by the argument in the previous paragraph. Finally, if $rst \in W^I$, then Lemma \ref{C2} implies that
\begin{equation*}
\delta_{rt} - a_{rt} + a_{rs} a_{st} = \delta_{\tau(r) \tau(t)} - a'_{\tau(r) \tau(t)} + a'_{\tau(r) \tau(s)} a'_{\tau(s) \tau(t)}.
\end{equation*}
Since $\tau$ is bijective, $\delta_{rt} = \delta_{\tau(r) \tau(t)}$. Also, applying the argument in the previous paragraph again, we have $a_{rt} = a'_{\tau(r) \tau(t)}$ and $a_{st} = a'_{\tau(s) \tau(t)} \neq 0$. Hence $a_{rs} = a'_{\tau(r) \tau(s)}$. In conclusion, we always have $a_{rs} = a'_{\tau(r) \tau(s)}$. Note that the ordered pair $(t_1, t_2) = (r, s)$ or $(s, r)$, and the latter one can only appear when $rs = sr$. If $(t_1, t_2) = (r, s)$, then $(\tau(t_1), \tau(t_2)) = (\tau(r), \tau(s))$, and hence $a_{t_1 t_2} = a'_{\tau(t_1) \tau(t_2)}$. Otherwise, $(t_1, t_2) = (s, r)$ and $(\tau(t_1), \tau(t_2)) = (\tau(s), \tau(r))$. Since $sr \notin [1, w]^I$, $\tau(s) \tau(r)$ does not lie in $[1, w']^{I'}$. Hence $\tau(s) \tau(r) = \tau(r) \tau(s)$. Therefore, $a_{t_1 t_2} = -1 = a'_{\tau(t_1) \tau(t_2)}$.
\end{proof}

\begin{proof}[Proof of Theorem \ref{thm-iso->car}]
    Assume that both $w$ and $w'$ are fully supported. If $X(w, A, I)$ and $X(w', A', I')$ are algebraically isomorphic, then $|I| = |I'|$ by Proposition \ref{not}. The case $I$ being empty is \cite[Theorem 1.3]{RS21}. The case $|I| = 1$ is Proposition \ref{neces}.
\end{proof}

\section{Constructing Isomorphism: Equal Supports}\label{con iso}
In this section, we will construct the isomorphism for Cartan equivalent pair.

Let $A = (a_{st})_{(s, t) \in S^2}$ and $A' = (a'_{s't'})_{(s', t') \in S'^2}$ be two Cartan matrices with associated Weyl groups $W$ and $W'$, and sets of simple reflections $S$ and $S'$, respectively. Let $I \subset S$ and $I' \subset S'$. Take two fully supported elements $w \in W^I$ and $w' \in W'^{I'}$. We assume that $(w, A, I)$ and $(w', A', I')$ are Cartan equivalent; in other words, there is a bijection $\tau: S \to S'$ sending $I$ to $I'$ such that:
\begin{enumerate}
\item for some reduced word $w = s_1 \cdots s_k$, $w' = \tau(s_1) \cdots \tau(s_k)$;
\item for any $t_1, t_2 \in S$, $a_{t_1 t_2} = a'_{\tau(t_1) \tau(t_2)}$ whenever $t_1 t_2 \leq w$.
\end{enumerate}

We want to show that the varieties $X(w, A, I)$ and $X(w', A', I')$ are algebraically isomorphic. First, we have the following result.

\begin{lemma}\label{Car equ}
The bijection $\tau: S \to S'$ induces a poset isomorphism $[1, w]^I \to [1, w']^{I'}$.
\end{lemma}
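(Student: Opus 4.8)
The plan is to exploit the fact that Bruhat order on $W$ is determined by the subword property (Lemma \ref{B ord}) together with the Cartan-matrix data, so that a support-preserving bijection $\tau$ satisfying (a) and (b) must intertwine the two Bruhat orders, and then to pass from $[1,w]$ to $[1,w]^I$ using Lemma \ref{min}. More precisely, condition (a) tells us that $\tau$ sends at least one reduced word of $w$ to a reduced word of $w'$; condition (b) records all the ``small'' Cartan data needed to recover the braid relations on the subgroup $W_{S(w)} = W$. So the first step is to upgrade (a): I would show that $\tau$ sends \emph{every} reduced word of $w$ to a reduced word of $w'$, and more generally that $\tau$ induces a length-preserving bijection $[1,w] \to [1,w']$ that respects Bruhat covers.

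To carry this out, first I would verify that for any pair $t_1, t_2 \in S$ with $t_1 t_2 \le w$, the order $m_{t_1 t_2}$ of $t_1 t_2$ equals $m_{\tau(t_1)\tau(t_2)}$; this follows from (b) because, for rank-two Cartan data, the pair $(a_{t_1 t_2}, a_{t_2 t_1})$ determines the type $A_1\times A_1$, $A_2$, $B_2$, or $G_2$, hence $m_{t_1t_2} \in \{2,3,4,6\}$ — and the hypothesis $t_1 t_2 \le w$ guarantees $t_1 t_2$ lies in the interval so that both entries are visible (when only one of $t_1t_2 \le w$, $t_2 t_1 \le w$ holds one checks, as in the proof of Proposition \ref{neces}, that the reflections commute, forcing $m = 2$ on both sides). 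Next, since any two reduced words of $w$ are connected by braid moves (Tits), and each braid move only involves a pair $t_1, t_2$ with $t_1 t_2 \le w$, applying $\tau$ turns a braid move for $w$ into a valid braid move for $w'$; starting from the reduced word in (a) and propagating, I conclude $\tau(RW(w)) \subseteq RW(w')$, and by symmetry (running the argument for $\tau^{-1}$, noting $w'$ is fully supported) this is a bijection $RW(w) \to RW(w')$. Then Lemma \ref{B ord}(a) gives immediately that $v \mapsto \tau(v)$ (defined on subwords of a fixed reduced word of $w$) is a well-defined, order-preserving bijection $[1,w] \to [1,w']$ with inverse induced by $\tau^{-1}$, so it is a poset isomorphism; it is length-preserving since reduced subwords map to reduced subwords.

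Finally I would descend to the quotient. Since $\tau$ sends $I$ to $I'$, it restricts to an isomorphism $W_I \to W'_{I'}$ compatible with the Coxeter generators, hence $v \in W^I$ iff $\tau(v) \in W'^{I'}$ (being in $W^I$ is the condition $\ell(vs) > \ell(v)$ for all $s \in I$, which by Lemma \ref{root} and the braid-compatibility above is preserved). Thus the poset isomorphism $[1,w]\to[1,w']$ restricts to a bijection $[1,w]^I \to [1,w']^{I'}$; since $[u,v]^I = [u,v]\cap W^I$ carries the induced order and the ``$\min$'' maps on both sides are order-preserving (Lemma \ref{min}) and commute with $\tau$, this restriction is a poset isomorphism. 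The main obstacle is the first step — making rigorous that condition (b), which only constrains Cartan entries for pairs with $t_1t_2 \le w$, suffices to transport \emph{all} braid relations occurring among reduced words of $w$; the key point is that a braid move replacing $\cdots t_1 t_2 t_1 \cdots$ by $\cdots t_2 t_1 t_2 \cdots$ inside a reduced word of $w$ forces $t_1 t_2 \le w$ (indeed $\le$ the relevant alternating subword $\le w$), so (b) does apply to exactly the pairs we need, and no Cartan data ``outside the interval'' is ever required.
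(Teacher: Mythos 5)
Your overall strategy is the same as the paper's; the only real difference is that the paper outsources the heart of the argument --- that $\tau$ transports reduced words and hence induces a poset isomorphism $[1,w]\to[1,w']$ --- to \cite[Lemmas 2.2 and 2.4]{RS21}, whereas you reprove that content directly via Tits' theorem and the subword property. The final descent to $[1,w]^I\to[1,w']^{I'}$ via the characterization $v\in W^I\iff D_R(v)\cap I=\varnothing$ is exactly the paper's closing step (your appeal to Lemma \ref{min} there is superfluous: a poset isomorphism restricted to corresponding subsets is automatically a poset isomorphism for the induced orders). So the proposal is a legitimate, more self-contained version of the same proof.

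One claim in your first step is false as stated, although the damage is contained. You assert that $t_1t_2\le w$ forces $m_{t_1t_2}=m_{\tau(t_1)\tau(t_2)}$, and in the parenthetical you claim that if only one of $t_1t_2\le w$, $t_2t_1\le w$ holds then $t_1,t_2$ commute. The implication is backwards: if exactly one of the two products is $\le w$ then $t_1t_2\ne t_2t_1$, so they do \emph{not} commute (e.g.\ $w=s_1s_2$ in type $A_2$: $s_1s_2\le w$ but $s_2s_1\not\le w$). In that situation condition (b) only pins down $a_{t_1t_2}$ and not $a_{t_2t_1}$, so $m_{t_1t_2}$ is genuinely not determined --- taking $w=t_1t_2$ with $A$ of type $A_2$ and $A'$ of type $B_2$ satisfies (a) and (b) yet has $m=3\ne 4=m'$. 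Your argument survives because, as you correctly observe in the last paragraph, the only pairs for which you need equality of the orders are those occurring in a braid move inside a reduced word of $w$; there the full alternating word of length $m\ge 3$ is a consecutive (hence reduced) subword of a reduced word of $w$, so \emph{both} $t_1t_2\le w$ and $t_2t_1\le w$, and (b) supplies both Cartan entries. You should therefore drop the blanket claim and the erroneous parenthetical, and state only the restricted version; with that repair the proposal is correct.
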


\begin{proof}
If $v = s_1 \cdots s_k \leq w$ is a reduced word, then we define $\tau(v) = \tau(s_1) \cdots \tau(s_k)$. By \cite[Lemma 2.2]{RS21}, for any $v \in [1, w]$, $\tau(v)$ is well-defined, and $\tau$ induces a bijection
\begin{align*}
RW(v) \to RW(\tau(v)), && (s_1, \cdots, s_k) \mapsto (\tau(s_1), \cdots, \tau(s_k)),
\end{align*}
for any $v \in [1, w]$. It follows that $\tau$ also induces a bijection
\begin{equation*}
D_R(v) \to D_R(\tau(v)).
\end{equation*}
\cite[Lemma 2.4]{RS21} implies that $\tau$ induces a poset isomorphism $[1, w] \to [1, w']$. Given $v \in [1, w]$, since
\begin{equation*}
v \in W^I \iff D_R(v) \cap I = \varnothing \iff D_R(\tau(v)) \cap I' = \varnothing \iff \tau(v) \in W^I,
\end{equation*}
the poset isomorphism $[1, w] \to [1, w']$ restricts to the isomorphism $[1, w]^I \to [1, w']^{I'}$.
\end{proof}

In the lemmas below, we add the additional assumption that $a_{st} \leq a'_{\tau(s) \tau(t)}$ for all simple reflections $s, t \in S$.

Let $\mathfrak{g}(A)$ be the Kac-Moody Lie algebra associated to the Cartan matrix $A$, and let 
\begin{equation*}
\mathfrak{g}(A) = \mathfrak{h}(A) \oplus \bigoplus_{\alpha \in R} \mathfrak{g}(A)_\alpha
\end{equation*}
be its root space decomposition. The bijection $\tau$ induces a bijection $\tau: \Delta \mapsto \Delta'$ between simple roots, given by $\alpha_s \mapsto \alpha_{\tau(s)}$. Then $\mathfrak{h}(A)$ is the Cartan subalgebra spanned by $h_\beta$ for $\beta \in \Delta$, which can be identified as the simple coroot $\beta^\vee = 2 \beta / \langle \beta, \beta \rangle$. We let $e_\alpha \in \mathfrak{g}(A)_\alpha$, $f_\alpha \in \mathfrak{g}(A)_{- \alpha}$, and $h_\beta \in \mathfrak{h}(A)$ be a Chevalley basis of $\mathfrak{g}(A)$, where $\alpha \in R^+$ and $\beta \in \Delta$. We set
\begin{equation*}
    \mathfrak{n}(A)^\pm = \bigoplus_{\alpha \in R^\pm} \mathfrak{g}(A)_\alpha
\end{equation*}
and
\begin{equation*}
    \mathfrak{b}(A) = \mathfrak{h}(A) \oplus \mathfrak{n}(A)^+.
\end{equation*}
The similar notations can be defined when replacing $A$ by $A'$. The lemma below is \cite[Lemma 3.5 (a)]{RS21}.

\begin{lemma}\label{Lie homomor}
There are surjective Lie algebra homomorphisms
\begin{equation*}
\varphi^\pm: \mathfrak{n}^\pm(A) \to \mathfrak{n}^\pm(A'),
\end{equation*}
given by $e_\alpha \mapsto e'_{\tau(\alpha)}$ and $f_\alpha \mapsto f'_{\tau(\alpha)}$, respectively.
\end{lemma}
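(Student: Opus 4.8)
The claim is that there exist surjective Lie algebra homomorphisms $\varphi^\pm \colon \mathfrak{n}^\pm(A) \to \mathfrak{n}^\pm(A')$ sending $e_\alpha \mapsto e'_{\tau(\alpha)}$ and $f_\alpha \mapsto f'_{\tau(\alpha)}$. I will treat the $+$ case; the $-$ case is identical after swapping the roles of $e$'s and $f$'s (or applying the Chevalley involution). The plan is to build $\varphi^+$ by presenting $\mathfrak{n}^+(A)$ via generators and relations and then checking that the images satisfy those relations in $\mathfrak{n}^+(A')$.

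First I would recall the Serre presentation: $\mathfrak{n}^+(A)$ is the free Lie algebra on symbols $\{e_s\}_{s \in S}$ modulo the Serre relations $(\operatorname{ad} e_s)^{1 - a_{st}}(e_t) = 0$ for $s \neq t$. (More precisely, $\mathfrak{n}^+$ is the quotient of the free Lie algebra by the ideal generated by these elements; this is standard, e.g. it follows from the Kac--Moody presentation of $\mathfrak{g}$ restricted to the upper-triangular part.) To get a homomorphism it suffices to send $e_s \mapsto e'_{\tau(s)}$ on generators and verify that $(\operatorname{ad} e'_{\tau(s)})^{1 - a_{st}}(e'_{\tau(t)}) = 0$ in $\mathfrak{n}^+(A')$ for every pair $s \neq t$ in $S$. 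Here the additional hypothesis in force for these lemmas, namely $a_{st} \leq a'_{\tau(s)\tau(t)} \leq 0$, does the work: the genuine Serre relation in $\mathfrak{n}^+(A')$ is $(\operatorname{ad} e'_{\tau(s)})^{1 - a'_{\tau(s)\tau(t)}}(e'_{\tau(t)}) = 0$, and since $1 - a_{st} \geq 1 - a'_{\tau(s)\tau(t)}$, the element $(\operatorname{ad} e'_{\tau(s)})^{1 - a_{st}}(e'_{\tau(t)})$ is obtained from it by applying $\operatorname{ad} e'_{\tau(s)}$ a nonnegative number of additional times, hence also vanishes. This yields a well-defined Lie algebra homomorphism $\varphi^+$.

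For surjectivity, note that $\mathfrak{n}^+(A')$ is generated as a Lie algebra by $\{e'_{s'}\}_{s' \in S'}$, and since $\tau \colon S \to S'$ is a bijection, every generator $e'_{s'}$ equals $\varphi^+(e_{\tau^{-1}(s')})$ and so lies in the image; therefore $\varphi^+$ is onto. Finally, compatibility with the stated weight-space behaviour ($e_\alpha \mapsto e'_{\tau(\alpha)}$ for all positive roots $\alpha$, not just simple ones) follows because $\tau$ extends to a map on root lattices, and $\varphi^+$ is automatically graded: it sends the weight-$\beta$ component of $\mathfrak{n}^+(A)$ into the weight-$\tau(\beta)$ component of $\mathfrak{n}^+(A')$, since this holds on generators and weights add under brackets.

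The main obstacle is simply ensuring that the Serre presentation is available in exactly the form needed and that the sign/inequality bookkeeping with $a_{st} \leq a'_{\tau(s)\tau(t)} \leq 0$ is correct — this is where the hypothesis added just before the lemma is essential, and it is worth stating explicitly that without it one would only get a homomorphism in the reverse direction. Since this lemma is quoted verbatim from \cite[Lemma 3.5 (a)]{RS21}, I would in practice just cite that reference; the above is the argument one would reconstruct, and none of it depends on $I$ or $w$, so it transfers to the partial-flag setting without change.
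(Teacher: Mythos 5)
Your argument is correct and is essentially the standard one: the paper itself gives no proof, only the citation to \cite[Lemma 3.5(a)]{RS21}, and the argument there is exactly the Serre-presentation check you describe, with the hypothesis $a_{st}\le a'_{\tau(s)\tau(t)}$ guaranteeing that the exponent $1-a_{st}$ in each Serre relation is at least the exponent $1-a'_{\tau(s)\tau(t)}$ needed for vanishing in the target. Your bookkeeping and the surjectivity argument are both fine, so nothing further is needed.
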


For $v \in W$, we denote
\begin{equation*}
\mathfrak{n}(A)^+_v = \bigoplus_{\alpha \in I(v)} \mathfrak{g}(A)_\alpha,
\end{equation*}
where $\mathfrak{g}(A)_\alpha$ is the root space corresponding to the root $\alpha$. We define $\mathfrak{n}^+(A')_{v'}$ for $v' \in W'$ in a similar way. The following lemma rephrases \cite[Lemma 3.8]{RS21}.

\begin{lemma}\label{varphi}
The homomorphism $\varphi^+$ induces an isomorphism
\begin{equation*}
\varphi^+: \mathfrak{n}^+(A)_v \to \mathfrak{n}^+(A')_{\tau(v)}
\end{equation*}
for each $v \in [1, w]^I$.
\end{lemma}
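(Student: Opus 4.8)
This is a restatement of \cite[Lemma 3.8]{RS21}; since the statement involves only $\mathfrak g$, $\mathfrak g'$, the element $v$, and the data (a), (b), and not the parabolics, the argument there applies, and I outline it. The key point is that $\varphi^+$ is \emph{graded by the root lattice}: extend $\tau$ $\mathbb Z$-linearly to an injection $\mathbb Z\Delta\hookrightarrow\mathbb Z\Delta'$ (still written $\tau$, so $\tau(\sum c_s\alpha_s)=\sum c_s\alpha_{\tau(s)}$); since $\varphi^+(e_\alpha)=e'_{\tau(\alpha)}$ for $\alpha\in\Delta$ and $\varphi^+$ is a Lie algebra map, $\varphi^+(\mathfrak g_\gamma)\subseteq\mathfrak g'_{\tau(\gamma)}$ for every positive root $\gamma$ of $A$, with the convention $\mathfrak g'_{\tau(\gamma)}=0$ when $\tau(\gamma)\notin R'$. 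Both $\mathfrak n^+(A)_v=\bigoplus_{\gamma\in I(v)}\mathfrak g_\gamma$ and $\mathfrak n^+(A')_{\tau(v)}=\bigoplus_{\gamma'\in I(\tau(v))}\mathfrak g'_{\gamma'}$ have dimension $\ell(v)=\ell(\tau(v))$, so it suffices to prove: \emph{(i)} $\tau$ restricts to a bijection $I(v)\to I(\tau(v))$; and \emph{(ii)} $\varphi^+(e_\gamma)\neq0$ for each $\gamma\in I(v)$.

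For (i): since $\tau$ carries reduced words of elements of $[1,w]$ to reduced words (\cite[Lemma 2.2]{RS21}; cf.\ the proof of Lemma \ref{Car equ}), fix a reduced word $v=s_1\cdots s_k$ with $\tau(v)=\tau(s_1)\cdots\tau(s_k)$ reduced. Then $I(v)$ and $I(\tau(v))$ are enumerated without repetition by $\beta_j=s_1\cdots s_{j-1}(\alpha_{s_j})$ and $\beta'_j=\tau(s_1)\cdots\tau(s_{j-1})(\alpha_{\tau(s_j)})$, $1\le j\le k$. Applying $s_{j-1},s_{j-2},\dots,s_1$ in turn to $\alpha_{s_j}$, and using that $s_a\cdots s_{b-1}(\alpha_{s_b})$ is supported on $\{s_a,\dots,s_b\}$, one sees that the coordinates of $\beta_j$ in the basis $\Delta$ are universal integer polynomials in the entries $a_{s_as_b}$ with $a<b\le j$. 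Each such $s_as_b$ is a subword of $s_1\cdots s_k$, hence $s_as_b\le v\le w$, so (b) gives $a_{s_as_b}=a'_{\tau(s_a)\tau(s_b)}$; running the same recursion over $A'$ therefore yields $\tau(\beta_j)=\beta'_j$, and (i) follows.

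For (ii) I prove the stronger claim that $\varphi^+(e_\gamma)\neq0$ for \emph{every} positive root $\gamma$ of $A$ with $\tau(\gamma)\in R'$ — which applies to $\gamma\in I(v)$ since there $\tau(\gamma)=\beta'_j\in I(\tau(v))\subset R'$ by (i). Induct on the height of $\gamma$. If $\gamma$ is simple then $\varphi^+(e_\gamma)=e'_{\tau(\gamma)}\neq0$. Otherwise choose a simple root $\alpha_s$ with $\langle\gamma,\alpha_s^\vee\rangle>0$; then $\delta:=\gamma-\alpha_s$ is a positive root of $A$ and $e_\gamma$ is a nonzero scalar multiple of $[e_{\alpha_s},e_\delta]$. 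Since $a_{st}\le a'_{\tau(s)\tau(t)}$ for all $t$, we get $\langle\tau(\gamma),\alpha_{\tau(s)}^\vee\rangle\ge\langle\gamma,\alpha_s^\vee\rangle>0$, so $\tau(\delta)=\tau(\gamma)-\alpha_{\tau(s)}$ is a positive root of $A'$; by induction $\varphi^+(e_\delta)\neq0$, hence $\varphi^+(e_\gamma)$ is a nonzero scalar multiple of $[e'_{\tau(s)},\varphi^+(e_\delta)]$, which spans $\mathfrak g'_{\tau(\gamma)}\neq0$. This proves (ii), and together with (i) and the dimension count, the lemma.

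The part I expect to be the real obstacle is (ii): although the induction is short, it is precisely where the standing inequality $a_{st}\le a'_{\tau(s)\tau(t)}$ — not merely the equalities furnished by (b) — is needed, and where one uses that $\varphi^+$ is a genuine surjective Lie algebra homomorphism, so that $\varphi^+[x,y]=[\varphi^+x,\varphi^+y]$ may be applied; both of these are the content of \cite[Lemma 3.5]{RS21}. Concretely, one must know that $\varphi^+$ does not annihilate any root space occurring in $\mathfrak n^+(A)_v$ — which it may well do for root spaces in general — and the argument above shows this cannot happen for the root spaces indexed by the inversions of $v$.
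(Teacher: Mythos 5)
Your proof is correct, and it is essentially the paper's: the paper offers no argument of its own here but simply cites \cite[Lemma 3.8]{RS21}, and your reconstruction (matching the inversion sets $I(v)\to I(\tau(v))$ via the reduced-word enumeration and condition (b), then the height induction using the standing inequality $a_{st}\le a'_{\tau(s)\tau(t)}$ to show $\varphi^+$ does not kill the relevant root spaces) is exactly the content of that cited lemma. You are also right that the inequality, not just the equalities from (b), is what makes the nonvanishing step work, which is why the paper imposes it before stating this lemma and removes it only at the end of the proof of Theorem \ref{suff}.
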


Let $\lambda$ be a dominant integral weight of $\mathfrak{g}(A)$. Then there is a unique irreducible $\mathfrak{g}(A)$-module $L_\lambda$ with the highest weight $\lambda$. It can be constructed below. Let $\mathbb{C}_\lambda$ be a one-dimensional complex vector space spanned by $\omega = \omega_\lambda$, whose $\mathfrak{b}(A)$-module structure is given by
\begin{align*}
h \omega & = \lambda(h) \omega, && h \in \mathfrak{h}(A), \\
e \omega & = 0, && e \in \mathfrak{n}^+(A).
\end{align*}
Then the \emph{Verma module} with highest weight $\lambda$ is
\begin{equation*}
M_\lambda = U(\mathfrak{g}(A)) \otimes_{U(\mathfrak{b}(A))} \mathbb{C}_\lambda,
\end{equation*}
where $U(\mathfrak{g}(A))$ and $U(\mathfrak{b}(A))$ are the universal enveloping algebras of $\mathfrak{g}(A)$ and $\mathfrak{b}(A)$, respectively. It has a submodule, denoted by $M^1_\lambda$, which is generated by $f_\alpha^{\lambda(h_\alpha) + 1} \otimes 1 = f_\alpha^{\lambda(h_\alpha) + 1} \omega$. Then by \cite[Theorem 8.28]{Kir08}, we have
\begin{equation*}
L_\lambda = M_\lambda/M^1_\lambda.
\end{equation*}

We say that a dominant integral weight $\lambda$ is an \emph{$I$-regular weight} if, for any simple reflection $s \in S$, $\lambda(\alpha_s^\vee) = 0$ if and only $s \in I$. The following result is in \cite[Lemma 7.1.2]{Kum12}.

\begin{lemma}\label{clo imm}
Suppose that $\lambda$ is an $I$-regular weight, and $V$ is a finite-dimensional highest weight module with the highest weight vector $\omega$. Then there is a closed immersion from the flag variety $G/P = X(A, I)$ into the projective space $\mathbb{P}(V)$, given by $gP \mapsto [g \omega]$, where $[g \omega]$ denote the line through $g \omega$.
\end{lemma}

If $\lambda$ and $\lambda'$ are $I$ and $I'$-regular weights, then we write $V = L_\lambda$ and $V' = L_{\lambda'}$ with corresponding highest weight vectors $\omega$ and $\omega'$, respectively.

\begin{lemma}\label{pi}
Let $\lambda$ and $\lambda'$ be $I$ and $I'$-regular weights respectively. If $\lambda(h_\alpha) = \lambda'(h'_{\tau(\alpha)})$ for each $\alpha \in \Delta$, then there is a surjective $\mathfrak{n}^-(A)$-homomorphism $\pi: V \to V'$, sending the highest weight vector $\omega$ to $\omega'$, where $V'$ is regarded as a $\mathfrak{n}^-(A)$-module via the homomorphism $\varphi^-$. The homomorphism $\pi$ satisfies
\begin{equation*}
\pi(\exp(e) v \omega) = \exp(\varphi^+(e)) \tau(v) \omega'
\end{equation*}
for all $e \in \mathfrak{n}^+(A)$ and $v \leq w$.
\end{lemma}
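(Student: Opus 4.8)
## Proof Proposal for Lemma \ref{pi}

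The plan is to construct $\pi$ by a Verma module quotient argument parallel to the explicit realization $L_\lambda = M_\lambda / M^1_\lambda$ recalled in the preliminaries, and then to verify the compatibility formula on the dense set of vectors of the form $\exp(e) v\omega$. First I would build a $\mathfrak{b}'$-module structure on $\mathbb{C}_{\lambda'}$ in the usual way and form $M_{\lambda'} = U(\mathfrak{g}') \otimes_{U(\mathfrak{b}')} \mathbb{C}_{\lambda'}$; pulling this back along the homomorphism $\mathfrak{g} \to \mathfrak{g}'$ assembled from $\varphi^+$, $\varphi^-$, and the induced map $\mathfrak{h} \to \mathfrak{h}'$ (which sends $h_\alpha \mapsto h'_{\tau(\alpha)}$), I get a $\mathfrak{g}$-module whose highest weight, by the hypothesis $\lambda(h_\alpha) = \lambda'(h'_{\tau(\alpha)})$, is exactly $\lambda$. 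The universal property of $M_\lambda$ then yields a surjective $\mathfrak{g}$-module map $M_\lambda \to M_{\lambda'}$ sending $\omega_\lambda \mapsto \omega_{\lambda'}$. The point of the sign condition $a_{st} \le a'_{\tau(s)\tau(t)}$, equivalently $\lambda(h_{\alpha_s}) = \lambda'(h'_{\tau(\alpha_s)})$ together with $-a_{st}\,$ being at most $-a'_{\tau(s)\tau(t)}$ in magnitude the other way, is that the generator $f_\alpha^{\lambda(h_\alpha)+1}\otimes 1$ of $M^1_\lambda$ maps into $M^1_{\lambda'}$: since $\lambda(h_\alpha) = \lambda'(h'_{\tau(\alpha)})$ for $\alpha$ simple, the exponent matches and $\varphi^-(f_\alpha) = f'_{\tau(\alpha)}$, so the image is $f'^{\,\lambda'(h'_{\tau(\alpha)})+1}_{\tau(\alpha)} \otimes 1 \in M^1_{\lambda'}$. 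Hence the map descends to $\pi: L_\lambda \to L_{\lambda'}$, still surjective, with $\pi(\omega) = \omega'$. It is $U(\mathfrak{g})$-linear, in particular $\mathfrak{n}^-$-linear where $V'$ carries the $\mathfrak{n}^-$-action through $\varphi^-$.

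Next I would establish the formula $\pi(\exp(e) v\omega) = \exp(\varphi^+(e))\,\tau(v)\,\omega'$ for $e \in \mathfrak{n}^+(A)$ and $v \le w$. The idea is that both $\pi$-linearity and the Lie algebra compatibility let me move $\pi$ past each operator. Fix a reduced word $v = s_1\cdots s_k$; then $v\omega$ is obtained from $\omega$ by applying a product of one-parameter unipotent elements $\exp(f_{\alpha_{s_j}})$ (up to torus factors that act by scalars agreeing under $\tau$, because $\lambda$ and $\lambda'$ match on the relevant coweights), and similarly $\tau(v)\omega'$ uses $\exp(f'_{\tau(\alpha_{s_j})})$. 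Since $\pi$ intertwines $f_\alpha$ with $f'_{\tau(\alpha)}$ and is continuous (it is linear on finite-dimensional spaces), it intertwines $\exp(f_\alpha)$ with $\exp(f'_{\tau(\alpha)})$, giving $\pi(v\omega) = \tau(v)\omega'$. For the $\exp(e)$ factor with $e \in \mathfrak{n}^+(A)$: because $v \le w$, Lemma \ref{varphi} tells us $\varphi^+$ restricts to an isomorphism $\mathfrak{n}^+(A)_v \xrightarrow{\sim} \mathfrak{n}^+(A')_{\tau(v)}$, and more to the point $\pi$ intertwines the action of $e$ with that of $\varphi^+(e)$ (this is just $U(\mathfrak{g})$-linearity again, since $e \in \mathfrak{n}^+ \subset \mathfrak{g}$ and $\varphi^+ = $ the restriction of the algebra map to $\mathfrak{n}^+$). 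Therefore $\pi(\exp(e) v\omega) = \exp(\varphi^+(e)) \pi(v\omega) = \exp(\varphi^+(e))\tau(v)\omega'$, as claimed.

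I expect the main obstacle to be the bookkeeping in the descent to $L_\lambda$: one must check that the \emph{full} submodule $M^1_\lambda$ — not merely its listed generators over $U(\mathfrak{g})$ — maps into $M^1_{\lambda'}$, which is automatic once the generators do since the map is a $\mathfrak{g}$-module homomorphism, but this relies on the genuinely global statement that the algebra map $\mathfrak{g} \to \mathfrak{g}'$ exists and is a homomorphism, i.e. respects all Serre-type relations. Here the inequality hypothesis $a_{st} \le a'_{\tau(s)\tau(t)}$ is exactly what makes $\varphi^\pm$ well-defined Lie algebra maps (the previous lemma), and one should take care that the $\mathfrak{h}$-part is glued in consistently — this is where $\lambda(h_\alpha) = \lambda'(h'_{\tau(\alpha)})$ re-enters to guarantee the torus acts compatibly on highest weight lines. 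A secondary subtlety is that $\exp(e)$ and the Weyl-group-lift operators are a priori defined on the group side; one should be mildly careful phrasing $v\omega$ and $\exp(e)v\omega$ as elements of the integrable module $V$ acted on by the algebraic group $G$, and note that $\pi$ is $G$-equivariant because it is $\mathfrak{g}$-equivariant and $V, V'$ are finite-dimensional (so integrate the action). Once those points are in place, the displayed formula follows formally.
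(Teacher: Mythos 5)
The paper's own proof is a two-line citation to \cite{RS21} (Lemmas 3.5(b) and 3.9), so the real question is whether your reconstruction of that argument is sound. It is not: the central device you use does not exist. You propose to pull $M_{\lambda'}$ back along ``the homomorphism $\mathfrak{g}\to\mathfrak{g}'$ assembled from $\varphi^+$, $\varphi^-$, and $\mathfrak{h}\to\mathfrak{h}'$,'' but when $a_{st}<a'_{\tau(s)\tau(t)}$ for some pair — which is exactly the situation this lemma is designed to handle, e.g.\ $A=M_2$, $A'=M_1$, $w=s_1s_2$, where $a_{21}=-2\neq -1=a'_{21}$ — no such Lie algebra map can exist: it would have to send the relation $[h_{\alpha_s},e_{\alpha_t}]=a_{st}e_{\alpha_t}$ to $[h'_{\tau(\alpha_s)},e'_{\tau(\alpha_t)}]=a_{st}e'_{\tau(\alpha_t)}$, whereas in $\mathfrak{g}'$ that bracket equals $a'_{\tau(s)\tau(t)}e'_{\tau(\alpha_t)}$. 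The maps $\varphi^{\pm}$ exist only on the two nilpotent halves separately (the inequality on Cartan entries makes the Serre relations of $\mathfrak{n}^{\pm}(A)$ hold in $\mathfrak{n}^{\pm}(A')$); they do not glue with $\mathfrak{h}\to\mathfrak{h}'$ into a map of $\mathfrak{g}$. Consequently your Verma-module quotient argument does not get off the ground, and the correct construction (as in \cite{RS21}) instead realizes $L_\lambda$ as the cyclic $U(\mathfrak{n}^-)$-module $U(\mathfrak{n}^-)/J_\lambda$ with $J_\lambda$ the left ideal generated by the $f_{\alpha}^{\lambda(h_\alpha)+1}$, and uses surjectivity of $\varphi^-$ together with the matching exponents to send $J_\lambda$ into $J_{\lambda'}$. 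This yields only an $\mathfrak{n}^-$-homomorphism — which is all the lemma claims.

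The same error propagates into your verification of the displayed formula. You assert that $\pi$ intertwines $e$ with $\varphi^+(e)$ ``by $U(\mathfrak{g})$-linearity'' and that $\pi$ is $G$-equivariant; both are false in general, precisely because $\pi$ is not a $\mathfrak{g}$-module map. (A direct check on vectors like $e_\alpha f_\beta f_\gamma\omega$ shows the intertwining fails whenever the relevant Cartan entries differ, since the eigenvalues of $h_\alpha$ on corresponding weight spaces of $V$ and $V'$ disagree.) The identity $\pi(\exp(e)v\omega)=\exp(\varphi^+(e))\tau(v)\omega'$ is therefore not a formal consequence of equivariance; it is a separate, genuinely nontrivial statement proved in \cite{RS21} (Lemma 3.9) by induction on $\ell(v)$ using $\mathfrak{sl}_2$-theory on extreme weight vectors (writing $s_ju\omega$ as $f_{\alpha_j}^{m}u\omega/m!$ with $m=\langle u\lambda,\alpha_j^\vee\rangle$ and checking these exponents match under $\tau$ via the Cartan-equivalence hypothesis on $[1,w]$), together with the isomorphism $\mathfrak{n}^+(A)_v\to\mathfrak{n}^+(A')_{\tau(v)}$ of Lemma \ref{varphi}. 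Your phrase ``up to torus factors that act by scalars agreeing under $\tau$'' is where this entire argument is hiding.
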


\begin{remark}
The formula makes sense since the Weyl group element $v$ can be lifted to an element in the normalizer of the maximal torus, and different liftings acting on the vector $\omega$ give the same result.
\end{remark}

\begin{proof}[Proof of Lemma \ref{pi}]
The first part of the lemma is \cite[Lemma 3.5 (b)]{RS21}. The second part is \cite[Lemma 3.9]{RS21}.
\end{proof}

With the preparation above, we can prove Theorem \ref{thm-car->iso}.

\begin{proof}[Proof of Theorem \ref{thm-car->iso}]
    Using \cite[Lemma 4.8]{RS16}, we suppose that both $w$ and $w'$ are fully supported. Hence our assumption in this section holds.
    
    First, we assume that $a_{st} \leq a'_{s't'}$. Take $\lambda$ and $\lambda'$ to be $I$ and $I'$-regular weights, respectively. By possibly increasing $\lambda$ or $\lambda'$, we can assume that $\lambda(h_\alpha) = \lambda'(h'_{\tau(\alpha)})$ for each $\alpha \in \Delta$. Take $G$ (resp. $P$) to be the reductive group (resp. the parabolic subgroup) corresponding to the flag variety $X(A, I)$. Then a Schubert cell can be uniquely written as $U_v vP/P$ for some $v \in W^I$, where $U_v$ is the unipotent subgroup with Lie algebra $\mathfrak{n}^+(A)_v$. Hence under the closed immersion given in Lemma \ref{clo imm}, every element in $X(w, A, I)$ can be written uniquely as $[\exp(e) v \omega] \in \mathbb{P}(V)$, the line through $\exp(e) v \omega \in V$, for some $v \in [1, w]^I$ and $e \in \mathfrak{n}^+_v$. Then Lemma \ref{varphi} and \ref{pi} show that $\pi$ restricts to a bijection between Schubert cells indexed by $v$ and $\tau(v)$. Hence Lemma \ref{Car equ} implies that $\pi$ restricts to a bijection
    \begin{equation*}
        X(w, A, I) \to X(w', A', I').
    \end{equation*}
    Since Schubert varieties are normal, $X(w, A, I)$ and $X(w', A', I')$ are isomorphic.

    Next we remove the assumption $a_{st} \leq a'_{\tau(s) \tau(t)}$. Then we define a Cartan matrix $A'' = (a''_{st})$, where $a''_{st} = \max \{ a_{st}, a'_{\tau(s) \tau(t)} \}$. Compared to the Dynkin diagram of $A$, the one of $A''$ is just the diagram obtained by removing several edges, so it is still of finite type. Based on the argument above, we conclude that
    \begin{equation*}
        X(w, A, I) \cong X(w, A'', I) \cong X(w', A', I').
    \end{equation*}
\end{proof}

\section{Folding by Automorphisms}\label{folding-recall}
There are many isomorphic pairs of Schubert varieties which are not equally supported. For instance, \cite[Example 1.5]{RS21} claims that the Schubert varieties
\begin{align*}
X \left( s_3 s_2 s_1, \begin{pmatrix} 2 & -1 & 0 \\ -1 & 2 & -1 \\ 0 & -1 & 2 \end{pmatrix}, \{ s_2, s_3 \} \right) && \mbox{and} && X \left( s_1 s_2 s_1, \begin{pmatrix} 2 & -2 \\ -1 & 2 \end{pmatrix}, \{ s_2 \} \right),
\end{align*}
of types $A_3$ and $C_2$ respectively, are algebraically isomorphic to the projective space $\mathbb{P}^3$. Thus we need another method to deal with such pairs. We consider the pairs obtained from folding the root system.

We recall the basic definitions and results of folding a root system.

Let $R$ be a simply laced root system (of finite type, as usual) with simple roots $\Delta$ embedded in the real vector space $V = \mbox{Span } R$ with inner product $\langle \cdot, \cdot \rangle$. Let $\tau$ be a diagram automorphism of $(R, \Delta)$. It is a permutation of $\Delta$ (or the set $S$ of simple reflections) with the property
\begin{equation*}
\langle \alpha_{\tau(s)}, \alpha_{\tau(t)} \rangle = \langle \tau(\alpha_s), \tau(\alpha_t) \rangle
\end{equation*}
for both $s, t \in S$. If we extend the map $\alpha_s \mapsto \alpha_{\tau(s)}$ linearly, we may view $\tau$ as an isometry of $V$. Denote $\{ \Delta_i \}_i$ as the collection of $\tau$-orbits. It is known that the simple roots in the same $\tau$-orbit $\Delta_i$ are pairwise orthogonal. Let
\begin{equation*}
\beta_i^\vee = \sum_{\alpha \in \Delta_i} \alpha^\vee
\end{equation*}
be the sum of simple coroots in the same $\tau$-orbit, and let $\Delta^\tau$ be the collection of all such $\beta_i$. Then $\Delta^\tau$ is a set of simple roots of some root system $R^\tau$. The root system $(R^\tau, \Delta^\tau)$ is called the root system obtained from \emph{folding} the root system $(R, \Delta)$ by the automorphism $\tau$.

Figure \ref{fig:enter-label} gives some examples of folding.

\begin{figure}[h]
    \centering
    \includegraphics[width=1\linewidth]{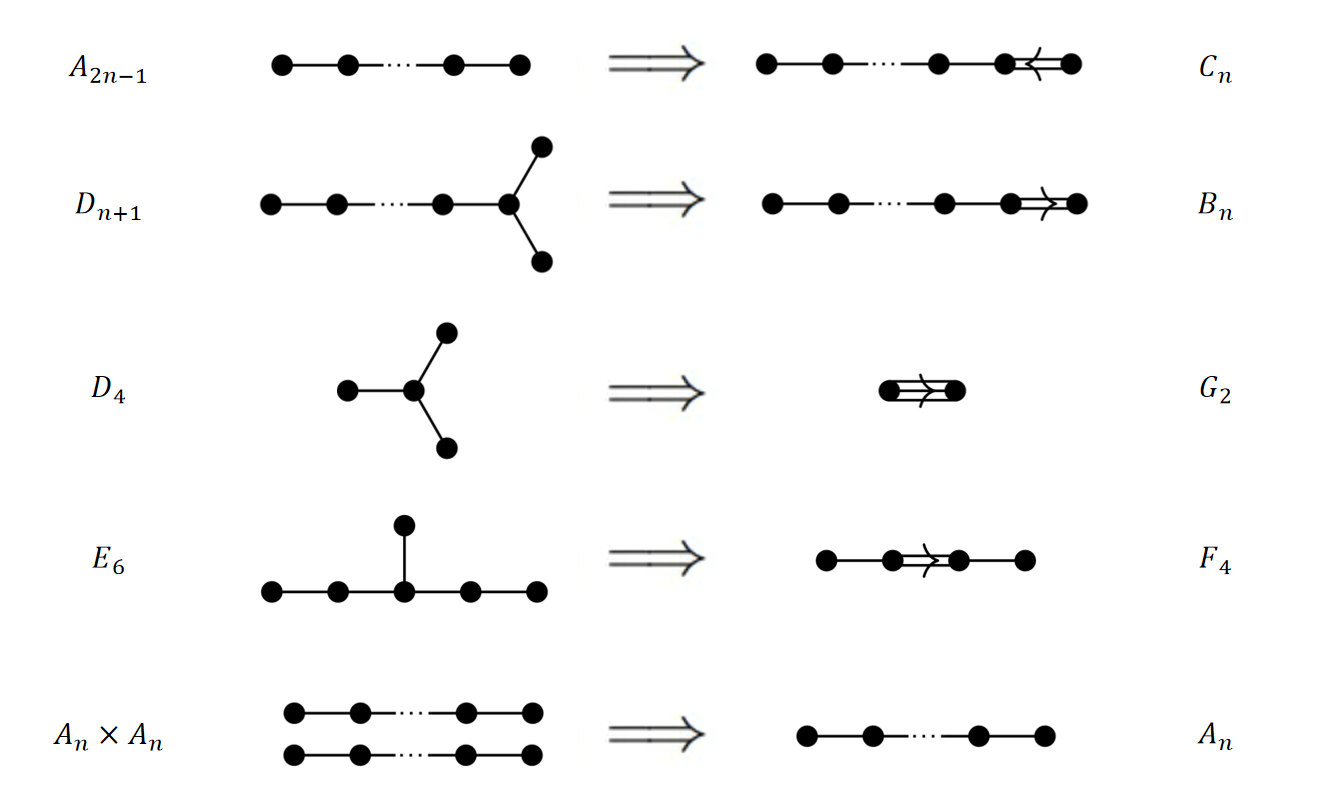}
    \caption{Examples of Folding}
    \label{fig:enter-label}
\end{figure}

\begin{lemma}
A sum of pairwise orthogonal coroots in a single $\tau$-orbit is a coroot of $R^\tau$. 
Conversely, all coroots of $R^\tau$ have this form.
\end{lemma}

This is \cite[Claim 1]{St08}.

Since $\tau s_\alpha \tau^{-1} = s_{\tau(\alpha)}$, $\tau$ acts via conjugation as an automorphism of the Weyl group $W$. Let $I_i$ be the collection of simple reflections $s_\alpha$ with $\alpha \in \Delta_i$. We denote
\begin{equation*}
t_i = \prod_{s \in I_i} s \in W.
\end{equation*}

\begin{lemma}\label{fold-weyl}
The homomorphism $s_{\beta_i} \mapsto t_i$ is an isomorphism from the Weyl group of the root system $R^\tau$ onto the subgroup $W^\tau$ of $W$ fixed by the action of $\tau$.
\end{lemma}

This is \cite[Claim 3]{St08}. In other words, denoting the collection of $t_i$ by $S^\tau$, $(W^\tau, S^\tau)$ is the Weyl group of the root system $(R^\tau, \Delta^\tau)$.

Recall that the entry $a_{st}$ of the Cartan matrix of $R$ is given by $a_{st} = \langle \alpha_s^\vee, \alpha_t \rangle$. We have $a_{st} = a_{\tau(s) \tau(t)}$.

\begin{lemma}\label{fold-cartan matrix}
Let
\begin{align*}
A = (a_{s_i s_j})_{(s_i, s_j) \in S^2} && \mbox{and} && A^\tau = (a_{t_k t_l})_{(t_k, t_l) \in {S^\tau}^2}
\end{align*}
be the Cartan matrices of the root systems $(R, \Delta)$ and $(R^\tau, \Delta^\tau)$, respectively. Then the formula
\begin{equation*}
a_{t_i t_j} = \sum_{s_i \in I_i} a_{s_i s_j}
\end{equation*}
holds for any $s_j \in I_j$.
\end{lemma}

\begin{proof}
By definition, we have
\begin{align*}
\beta_i^\vee = \sum_{\alpha_i \in \Delta_i} \alpha_i^\vee && \mbox{and} && \beta_j^\vee = \sum_{\alpha_j \in \Delta_j} \alpha_j^\vee.
\end{align*}
Recall that the simple roots in the same $\tau$-orbit $\Delta_i$ are pairwise orthogonal. We compute that
\begin{align*}
a_{t_i t_j} & = \langle \beta_i^\vee, \beta_j \rangle = \langle \beta_i^\vee, \beta_j^{\vee \vee} \rangle = \frac{2 \langle \beta_i^\vee, \beta_j^\vee \rangle}{\langle \beta_j^\vee, \beta_j^\vee \rangle} = \frac{2 \left\langle \sum_{\alpha_i \in \Delta_i} \alpha_i^\vee, \sum_{\alpha_j \in \Delta_j} \alpha_j^\vee \right\rangle}{\left\langle \sum_{\alpha_j \in \Delta_j} \alpha_j^\vee, \sum_{\alpha_j \in \Delta_j} \alpha_j^\vee \right\rangle} \\
& = \frac{2 \sum_{\alpha_i \in \Delta_i} \sum_{\alpha_j \in \Delta_j} \langle \alpha_i^\vee, \alpha_j^\vee \rangle}{\sum_{\alpha_j \in \Delta_j} \langle \alpha_j^\vee, \alpha_j^\vee \rangle} = \frac{2 \sum_{\alpha_i \in \Delta_i} \sum_{\alpha_j \in \Delta_j} \frac{2 \langle \alpha_i^\vee, \alpha_j \rangle}{\langle \alpha_j, \alpha_j \rangle}}{\sum_{\alpha_j \in \Delta_j} \left\langle \frac{2 \alpha_j}{\langle \alpha_j, \alpha_j \rangle}, \frac{2 \alpha_j}{\langle \alpha_j, \alpha_j \rangle} \right\rangle} \\
& = \frac{\sum_{\alpha_j \in \Delta_j} \frac{1}{\langle \alpha_j, \alpha_j \rangle} \sum_{\alpha_i \in \Delta_i} \langle \alpha_i^\vee, \alpha_j \rangle}{\sum_{\alpha_j \in \Delta_j} \frac{1}{\langle \alpha_j, \alpha_j \rangle}} = \frac{\sum_{\alpha_j \in \Delta_j} \frac{1}{\langle \alpha_j, \alpha_j \rangle} \sum_{s_i \in I_i} a_{s_i s_{\alpha_j}}}{\sum_{\alpha_j \in \Delta_j} \frac{1}{\langle \alpha_j, \alpha_j \rangle}}.
\end{align*}
To complete the proof, it is sufficient to show that the summation $\sum_{s_i \in I_i} a_{s_i s_j}$ is independent on $s_j \in I_j$. In fact,
\begin{equation*}
\sum_{s_i \in I_i} a_{s_i s_j} = \sum_{s_i \in I_i} a_{\tau(s_i) \tau(s_j)} = \sum_{s_i \in I_i} a_{s_i \tau(s_j)},
\end{equation*}
as required.
\end{proof}

Let $\mathfrak{g} = \mathfrak{n}^- \oplus \mathfrak{h} \oplus \mathfrak{n}^+$ and $\mathfrak{g}^\tau = {\mathfrak{n}^\tau}^- \oplus \mathfrak{h}^\tau \oplus {\mathfrak{n}^\tau}^+$ be two complex reductive Lie algebras corresponding to the root systems $(R, \Delta)$ and $(R^\tau, \Delta^\tau)$, respectively.

\begin{lemma}\label{Lie homomorphism}
There is an injective Lie algebra homomorphism
\begin{equation*}
\varphi: \mathfrak{g}^\tau \to \mathfrak{g},
\end{equation*}
given by $e_{t_i} \mapsto \sum_{s \in I_i} e_s$, $h_{t_i} \mapsto \sum_{s \in I_i} h_s$ and $f_{t_i} \mapsto \sum_{s \in I_i} f_s$, respectively.
\end{lemma}

\begin{proof}
First we show that $\varphi$ is a Lie algebra homomorphism. It suffices to check the Serre's relations; that is,
\begin{align*}
\varphi([h_{t_i}, h_{t_j}]) & = 0, \\
\varphi([h_{t_i}, e_{t_j}]) & = a_{t_i t_j} \varphi(e_{t_j}), \\
\varphi([h_{t_i}, f_{t_j}]) & = -a_{t_i t_j} \varphi(f_{t_j}), \\
\varphi([e_{t_i}, f_{t_j}]) & = \delta_{ij} h_{t_i}, \\
\varphi \left( (\mbox{ad } e_{t_i})^{1 - a_{t_i t_j}} e_{t_j} \right) & = 0, \\
\varphi \left( (\mbox{ad } f_{t_i})^{1 - a_{t_i t_j}} f_{t_j} \right) & = 0.
\end{align*}
We compute them one by one. For the first one,
\begin{equation*}
\varphi([h_{t_i}, h_{t_j}]) = \left[ \sum_{s_i \in I_i} h_{s_i}, \sum_{s_j \in I_j} h_{s_j} \right] = \sum_{s_i \in I_i} \sum_{s_j \in I_j} [h_{s_i}, h_{s_j}] = 0.
\end{equation*}
The second and the third ones are similar. In fact, we have
\begin{equation*}
\varphi([h_{t_i}, e_{t_j}]) = \sum_{s_i \in I_i} \sum_{s_j \in I_j} [h_{s_i}, e_{s_j}] = \sum_{s_i \in I_i} \sum_{s_j \in I_j} a_{s_i s_j} e_{s_j} = \sum_{s_j \in I_j} a_{t_i t_j} e_{s_j} = a_{t_i t_j} \varphi(e_{t_j})
\end{equation*}
and
\begin{equation*}
\varphi([h_{t_i}, f_{t_j}]) = - \sum_{s_i \in I_i} \sum_{s_j \in I_j} a_{s_i s_j} f_{s_j} = - a_{t_i t_j} \varphi(f_{t_j}).
\end{equation*}
The fourth identity follows from the calculation
\begin{equation*}
\varphi([e_{t_i}, f_{t_j}]) = \sum_{s_i \in I_i} \sum_{s_j \in I_j} [e_{s_i}, f_{s_j}] = \sum_{s_i \in I_i} \sum_{s_j \in I_j} \delta_{s_i s_j} h_{s_i} = \delta_{ij} \sum_{s_i \in I_i} h_{s_i} = \delta_{ij} \varphi(h_{t_i}).
\end{equation*}
Now we show the fifth equation. Since the simple roots in the same $\tau$-orbit $\Delta_i$ are pairwise orthogonal, the operators $\mbox{ad } e_{s_i}$ commute with each other. Hence the left-hand side
\begin{equation*}
\varphi \left( (\mbox{ad } e_{t_i})^{1 - a_{t_i t_j}} e_{t_j} \right) = \sum_{s_j \in I_j} \left( \mbox{ad } \sum_{s_i \in I_i} e_{s_i} \right)^{1 - a_{t_i t_j}} e_{s_j}
\end{equation*}
is equal to zero by the pigeonhole principle and Serre's relation
\begin{equation*}
(\mbox{ad } e_{s_i})^{1 - a_{s_i s_j}} e_{s_j} = 0.
\end{equation*}
Similarly, the sixth equation also holds. Thus $\varphi$ is indeed a Lie algebra homomorphism.

By definition, $\varphi|_{\mathfrak{h}^\tau}: \mathfrak{h}^\tau \to \mathfrak{h}$ is injective. It remains to show that $\varphi|_{{\mathfrak{n}^\tau}^\pm}: {\mathfrak{n}^\tau}^\pm \to \mathfrak{n}^\pm$ is also injective. For any $\beta \in {R^\tau}^+$, it can be written as $\beta = t(\beta_i)$ for some $t \in W^\tau$ and $\beta_i \in S^\tau$. If we write $\beta_i^\vee = \sum_{\alpha \in \Delta_i} \alpha^\vee$, then $\beta^\vee = \sum_{\alpha \in \Delta_i} t(\alpha^\vee)$. Each coroot $t(\alpha^\vee) \in R$ is positive since they lie in the same $\tau$-orbit, and $\tau$ sends positive (resp. negative) coroots to positive coroots (resp. negative). Again, by definition, the image of the root vector $e_\beta$ is sent into the subspace $\bigoplus_{\alpha \in \Delta_i} \mathfrak{g}_{t(\alpha)} \subset \mathfrak{n}^+$, and different $\beta \in {R^\tau}^+$ correspond to different subspaces, so $\varphi|_{{\mathfrak{n}^\tau}^+}: {\mathfrak{n}^\tau}^+ \to \mathfrak{n}^+$ is injective. Similarly, $\varphi|_{{\mathfrak{n}^\tau}^-}: {\mathfrak{n}^\tau}^- \to \mathfrak{n}^-$ is also injective.
\end{proof}

From the proof of Lemma \ref{Lie homomorphism}, we conclude that every coroot of $R^\tau$ can be \emph{uniquely} written as a sum of pairwise orthogonal coroots of $R$ in a single $\tau$-orbit. If $\beta^\vee = \sum_{\alpha} \alpha^\vee$, then we write the collection of such roots $\alpha$ as $\Delta_\beta$. It is well-defined.

\section{Schubert Pairs from Folding}\label{explicit type}
Suppose that $(R, \Delta)$ is a simply laced root system with the Weyl group $(W, S)$ and $\tau$ is a (nontrivial) diagram automorphism. The root system $(R^\tau, \Delta^\tau)$ with the Weyl group $(W^\tau, S^\tau)$ is obtained from folding by $\tau$. Choosing $I \subset S$ and $I^\tau \subset S^\tau$ as two proper subsets. Assume that they satisfy the condition in Theorem \ref{fold-theorem}. In this section, we will give several examples to illustrate Theorem \ref{fold-theorem}. Before doing so, we first prove a lemma that will help us find the Schubert pairs.

\begin{lemma}\label{lem-fold-thm-bij}
The bijection $[1, w^\tau]^{I^\tau} \to [1, w]^I$ in Theorem \ref{fold-theorem} is a poset isomorphism preserving length; that is, for every reflection $v \in [1, w^\tau]^{I^\tau}$, the lengths $\ell_{W^\tau}(v)$ and $\ell_W(v^{\min})$ are the same.
\end{lemma}

\begin{proof}
\cite[Proposition 2.5.1]{BB05} implies that the bijection is order-preserving. To show it preserves length, we prove the following stronger statement: for a reduced expression $t_1 \cdots t_k \in [1, w^\tau]^{I^\tau}$, its image has the reduced expression $s_1 \cdots s_k \in [1, w]^I$ with $s_i \in I_i$.

The case $k = 0$ is trivial. Suppose that $k > 0$ and $t_1 \cdots t_k \in [1, w^\tau]^{I^\tau}$ are reduced expressions. Then $t_2 \cdots t_k \in [1, w^\tau]^{I^\tau}$ is also a reduced expression. By induction, its image has the reduced expression $s_2 \cdots s_k \in [1, w]^I$ with $s_i \in I_i$. Hence
\begin{equation*}
\left( \prod_{s \in I_1} s \right) s_2 \cdots s_k = t_1 (t_2 \cdots t_k)^{\min} \geq (t_1 \cdots t_k)^{\min} = \left( \prod_{s \in I_1} s s_2 \cdots s_k \right)^{\min} > s_2 \cdots s_k.
\end{equation*}
It follows that $(t_1 \cdots t_k)^{\min}$ has the form $s_1 \cdots s_k$ with $s_1 \in I_1$, since it is the minimal element lying between $s_2 \cdots s_k$ and $\prod_{s \in I_1} s s_2 \cdots s_k$. This shows that our claim is true.

Using the claim, it is easy to see that the bijection is length-preserving.
\end{proof}

Now we turn to the construction Schubert pairs. Without loss of generality, we can suppose that $(R^\tau, \Delta^\tau)$ is irreducible since any Schubert variety from a reducible root system is a product of Schubert varieties from irreducible root systems. Throughout this section, we use the type of a Cartan matrix to indicate the matrix itself unless there is ambiguity.

If $(R^\tau, \Delta^\tau)$ is still simply laced, then $(R, \Delta)$ is reducible, and each connected component of its Dynkin diagram can be identified as the one of $(R^\tau, \Delta^\tau)$, so we may identify the Weyl group $W$ as $(W^\tau)^n$, where $n$ is the number of components. The action of automorphism $\tau$ is identifying each component as $(R^\tau, \Delta^\tau)$. We want to find the relation between $I$ and $I^\tau$. Suppose that $t \in S^\tau \setminus I^\tau$ is sent to $s \in S \setminus I$. Then every simple reflection $S(w)$ corresponds to the simple roots in the same connected component. Indeed, suppose that $w^\tau = t_1 \cdots t_k \in {W^\tau}^{I^\tau}$ and $w = s_1 \cdots s_k \in W^I$ are reduced words, where $s_i \in I_i$ by the proof of Lemma \ref{lem-fold-thm-bij}. Let $1 \leq i < k$ is the largest integer such that $s_i$ does not correspond to a simple root in the same connected component. Then the image of $t_i \cdots t_k \in [1, w^\tau]^{I^\tau}$ in $[1, w]^I$ is
\begin{equation*}
    s_i \cdots s_k = (s'_i s_i \cdots s_k)^{\min},
\end{equation*}
where $s'_i \in I_i$ is the unique simple reflection corresponding to a simple root in the same connected component as $s_k$. We have $s'_i \neq s_j$ for $i < j \leq k$. It follows that $s'_i s_i \cdots s_k \in [1, w]^I$ is a reduced word, which is a contradiction. Hence we have $I = I^\tau \sqcup S^\tau \sqcup \cdots \sqcup S^\tau$ and
\begin{equation*}
W^I = {W^\tau}^{I^\tau} \times ({W^\tau}^{S^\tau})^{n-1} = {W^\tau}^{I^\tau}.
\end{equation*}
Thus, the isomorphism given in Theorem \ref{fold-theorem} is trivial.

\begin{example}
Consider the pair of root systems $A_n \times A_n$ and $A_n$ (see Figure \ref{fig:AA-A}). Write $S = \{ s_1, \cdots s_n, s'_1, \cdots s'_n \}$ and $S^\tau = \{ t_1, \cdots t_n \}$. We take $I = \{ s'_1, \cdots s'_n \}$, for example. If $w \in W^I$, then $s'_i \notin S(w)$. Thus if we write $w = s_{i_1} \cdots s_{i_k} \in W^I$, then $w^\tau = t_{i_1} \cdots t_{i_k} \in {W^\tau}^{I^\tau}$. Let $G = SL_{n+1}(\mathbb{C})$, $B$ the standard Borel subgroup, and $P = B \times G$ the parabolic subgroup of type $A_n \times A_n$ corresponding to $I$. Then
\begin{equation*}
X(w, A, I) = (B \times B)wP/P \cong BwB/B \times BG/G \cong BwB/B = X(w^\tau, A^\tau, I^\tau).
\end{equation*}
\begin{figure}[h]
    \centering
    \includegraphics[width=1\linewidth]{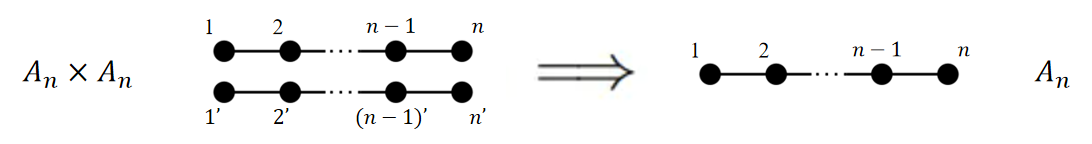}
    \caption{The Pair of Root Systems $A_n \times A_n$ and $A_n$}
    \label{fig:AA-A}
\end{figure}
\end{example}

Next, we assume that $(R^\tau, \Delta^\tau)$ is not simply laced. Restricting $\tau$ to a component of the Dynkin diagram of $(R, \Delta)$, we may suppose that $(R, \Delta)$ is irreducible. There are exactly four cases. We study them one by one.

\begin{example}\label{A-C-fold}
Consider the pair of root systems $A_{2n-1}$ and $C_n$ (see Figure \ref{fig:A-C}). Let $I = S \setminus \{ s_1 \}$ and $I^\tau = S^\tau \setminus \{ t_1 \}$. Both the posets $W^I$ and ${W^\tau}^{I^\tau}$ are chains, whose poset structures are given by
\begin{equation*}
W^I = \{ 1 < s_1 < s_2 s_1 < \cdots < s_{2n-1} s_{2n-2} \cdots s_1 \}
\end{equation*}
and
\begin{equation*}
{W^\tau}^{I^\tau} = \{ 1 < t_1 < t_2 t_1 < \cdots < t_n \cdots t_1 < t_{n-1} t_n \cdots t_1 < t_1 \cdots t_n \cdots t_1 \}.
\end{equation*}
Hence the canonical map ${W^\tau}^{I^\tau} \to W^I$ is a bijection. By Theorem \ref{fold-theorem} and \cite[Lemma 4.8]{RS16}, we have the isomorphisms
\begin{align*}
X(t_k \cdots t_1, C_n, S^\tau \setminus \{ t_1 \}) & \cong X(s_k \cdots s_1, A_{2n-1}, S \setminus \{ s_1 \}), \\
X(t_k \cdots t_n \cdots t_1, C_n, S^\tau \setminus \{ t_1 \}) & \cong X(s_{2n-k} \cdots s_n \cdots s_1, A_{2n-1}, S \setminus \{ s_1 \}),
\end{align*}
for $k \leq n$.
\begin{figure}[h]
    \centering
    \includegraphics[width=1\linewidth]{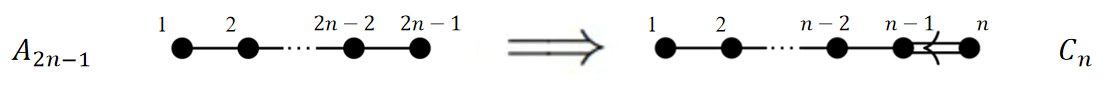}
    \caption{The Pair of Root Systems $A_{2n-1}$ and $C_n$}
    \label{fig:A-C}
\end{figure}
\end{example}

\begin{example}
Consider the pair of root systems $D_{n+1}$ and $B_n$ (see Figure \ref{fig:D-B}). The poset structures of $W^I$ and ${W^\tau}^{I^\tau}$ are messy, so we only give an example of the pair $D_4$ and $B_3$, with $I = \{ s_1, s_2, s_3 \}$ and $I^\tau = \{ t_1, t_2 \}$. The poset structures of $[1, t_1 t_3 t_2 t_3]^{I^\tau}$ and $[1, s_1 s_3 s_2 s_4]^I$ are given below,
\[\begin{tikzcd}
	& {t_1 t_3 t_2 t_3} &&&& {s_1 s_3 s_2 s_4} \\
	{t_1 t_2 t_3} && {t_3 t_2 t_3} && {s_1 s_2 s_4} && {s_3 s_2 s_4} \\
	& {t_2 t_3} &&&& {s_2 s_4} \\
	& {t_3} &&&& {s_4} \\
	& 1 &&&& 1
	\arrow[no head, from=2-1, to=1-2]
	\arrow[no head, from=2-3, to=1-2]
	\arrow[no head, from=2-5, to=1-6]
	\arrow[no head, from=2-7, to=1-6]
	\arrow[no head, from=3-2, to=2-1]
	\arrow[no head, from=3-2, to=2-3]
	\arrow[no head, from=3-6, to=2-5]
	\arrow[no head, from=3-6, to=2-7]
	\arrow[no head, from=4-2, to=3-2]
	\arrow[no head, from=4-6, to=3-6]
	\arrow[no head, from=5-2, to=4-2]
	\arrow[no head, from=5-6, to=4-6]
\end{tikzcd}\]
which are identical. Hence Theorem \ref{fold-theorem} implies the isomorphism
\begin{equation*}
X(t_1 t_3 t_2 t_3, B_3, S^\tau \setminus \{ t_3 \}) \cong X(s_1 s_3 s_2 s_4, D_4, S \setminus \{ s_4 \}).
\end{equation*}
\begin{figure}[h]
    \centering
    \includegraphics[width=1\linewidth]{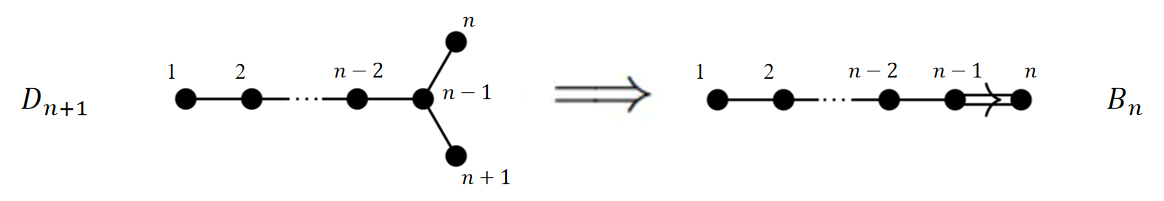}
    \caption{The Pair of Root Systems $D_{n+1}$ and $B_n$}
    \label{fig:D-B}
\end{figure}
\end{example}

\begin{example}
Consider the pair of root systems $D_4$ and $G_2$ (see Figure \ref{fig:D-G}). We let $I = \{ s_2, s_3, s_4 \}$ and $I^\tau = \{ t_2 \}$. In $G_2$, we have the poset $t_1 < t_2 t_1 < t_1 t_2 t_1$ in ${W^\tau}^{I^\tau}$. But
\begin{equation*}
t_1 t_2 t_1 = (s_1 s_2 s_3) s_4 (s_1 s_2 s_3) = (s_2 s_3) (s_1 s_4 s_1) (s_2 s_3) = (s_2 s_3) (s_4 s_1 s_4) (s_2 s_3) \mapsto s_2 s_3 s_4 s_1 \in W^I,
\end{equation*}
and $t_1 t_2 t_1 \in {W^\tau}^{I^\tau}$, $s_2 s_3 s_4 s_1 \in W^I$ having different lengths. Hence Theorem \ref{fold-theorem} only gives two isomorphisms in this case, which are
\begin{align*}
X(s_1, A, I) \cong X(t_1, A^\tau, I^\tau) && \mbox{and} && X(s_4 s_1, A, I) \cong X(t_2 t_1, A^\tau, I^\tau).
\end{align*}
The first isomorphism is trivial since any Schubert curve is isomorphic to the projective line $\mathbb{P}^1$. For the second one, the Schubert surfaces above are isomorphic to the projective plane $\mathbb{P}^2$ by the discussion in Example \ref{12}.
\begin{figure}[h]
    \centering
    \includegraphics[width=1\linewidth]{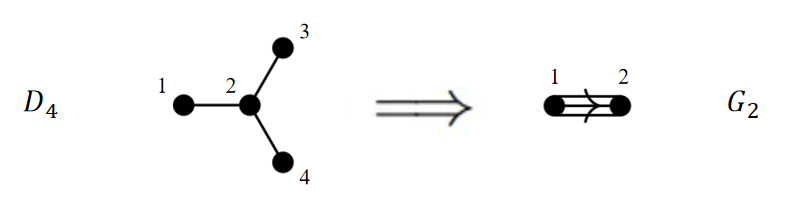}
    \caption{The Pair of Root Systems $D_4$ and $G_2$}
    \label{fig:D-G}
\end{figure}
\end{example}

\begin{example}
Consider the pair of root systems $E_6$ and $F_4$ (see Figure \ref{fig:E-F}). Let $I = S \setminus \{ s_1 \}$ and $I^\tau = S^\tau \setminus \{ t_1 \}$. The canonical map ${W^\tau}^{I^\tau} \to W^I$ gives more isomorphisms by Theorem \ref{fold-theorem}. We will not list them here.
\begin{figure}[h]
    \centering
    \includegraphics[width=1\linewidth]{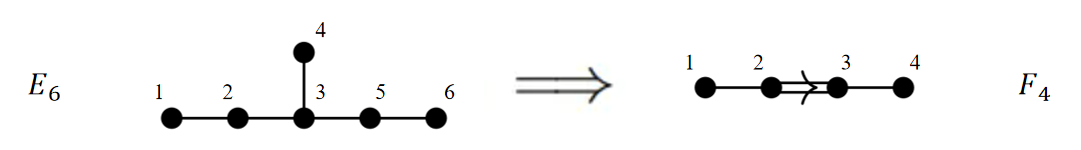}
    \caption{The Pair of Root Systems $E_6$ and $F_4$}
    \label{fig:E-F}
\end{figure}
\end{example}

Now we turn to studying the Schubert three-folds. A \emph{Schubert three-fold} is a Schubert variety of dimension three.

\begin{proof}
By \cite[Lemma 4.8]{RS16}, it is sufficient to consider the Schubert three-folds $(w, A, I)$ with fully supported $w$. Then the number of simple reflections in $I$ is at most three.

First, we consider the Schubert three-folds from reducible root systems. Then it is obvious that such a Schubert three-fold is a product of a Schubert surface and a projective line $\mathbb{P}^1$ since any Schubert curve is isomorphic to $\mathbb{P}^1$. By Theorem \ref{cla}, there are 7 isomorphism classes.

Next, suppose that the Schubert three-folds $(w, A, I)$ come from irreducible root systems. If $A$ has rank 2, then it has type $A_2$, $B_2$, or $G_2$. As what we did in Section \ref{EC}, for $n = 1, 2, 3$, we denote
\begin{equation*}
M_n = \begin{pmatrix} 2 & -1 \\ -n & 2 \end{pmatrix}.
\end{equation*}
By Theorem \ref{thm-iso->car}, any two of the 10 Schubert three-folds
\begin{align*}
X(s_1 s_2 s_1, M_1, \varnothing), & X(s_1 s_2 s_1, M_1, \{ s_2 \}), \\
X(s_1 s_2 s_1, M_2, \varnothing), X(s_2 s_1 s_2, M_2, \varnothing), & X(s_1 s_2 s_1, M_2, \{ s_2 \}), X(s_1 s_2 s_1, M_2, \{ s_1 \}), \\
X(s_1 s_2 s_1, M_3, \varnothing), X(s_2 s_1 s_2, M_3, \varnothing), & X(s_1 s_2 s_1, M_3, \{ s_2 \}), X(s_1 s_2 s_1, M_3, \{ s_1 \})
\end{align*}
are not isomorphic to each other.

Finally, we study the Schubert three-folds $(w, A, I)$ from irreducible root systems of rank 3. Then $A$ has type $A_3$, $B_3$, or $C_3$. We denote the Cartan matrices
\begin{align*}
N_1 = \begin{pmatrix} 2 & -1 & 0 \\ -1 & 2 & -1 \\ 0 & -1 & 2 \end{pmatrix}, && N_2 = \begin{pmatrix} 2 & -1 & 0 \\ -1 & 2 & -1 \\ 0 & -2 & 2 \end{pmatrix}, && N_3 = \begin{pmatrix} 2 & -1 & 0 \\ -1 & 2 & -2 \\ 0 & -1 & 2 \end{pmatrix}.
\end{align*}
The element $w \in W^I$ can be $s_1 s_2 s_3$, $s_1 s_3 s_2$, $s_2 s_1 s_3$, or $s_3 s_2 s_1$. If $I$ is empty, then there are 6 isomorphism classes
\begin{align*}
X(s_1 s_2 s_3, N_1, \varnothing), & X(s_1 s_2 s_3, N_3, \varnothing), \\
X(s_1 s_3 s_2, N_1, \varnothing), & X(s_1 s_3 s_2, N_2, \varnothing), \\
X(s_2 s_1 s_3, N_1, \varnothing), & X(s_2 s_1 s_3, N_3, \varnothing).
\end{align*}
Here, we note that there are isomorphisms
\begin{align*}
X(s_1 s_2 s_3, N_1, \varnothing) \cong X(s_3 s_2 s_1, N_1, \varnothing) && \mbox{and} && X(s_1 s_2 s_3, N_3, \varnothing) \cong X(s_3 s_2 s_1, N_2, \varnothing).
\end{align*}
If $I$ is a singleton, then there are 8 isomorphism classes
\begin{align*}
X(s_1 s_2 s_3, N_1, \{ s_1 \}), X(s_1 s_2 s_3, N_3, \{ s_1 \}), & X(s_1 s_2 s_3, N_1, \{ s_2 \}), X(s_1 s_2 s_3, N_3, \{ s_2 \}), \\
X(s_1 s_3 s_2, N_1, \{ s_1 \}), & X(s_1 s_3 s_2, N_2, \{ s_1 \}), \\
X(s_2 s_1 s_3, N_1, \{ s_2 \}), & X(s_2 s_1 s_3, N_3, \{ s_2 \}).
\end{align*}
If $|I| = 2$, then there are at most 4 isomorphism classes
\begin{align*}
X(s_1 s_2 s_3, N_1, \{ s_1, s_2 \}), & X(s_1 s_2 s_3, N_3, \{ s_1, s_2 \}), \\
X(s_1 s_3 s_2, N_1, \{ s_1, s_3 \}), & X(s_1 s_3 s_2, N_2, \{ s_1, s_3 \}).
\end{align*}

From Example \ref{A-C-fold} or \cite[Example 1.5]{RS21}, we know that the Schubert varieties
\begin{align*}
X(s_1 s_2 s_3, N_1, \{ s_1, s_2 \}) && \mbox{and} && X(s_2 s_1 s_2, M_2, \{ s_1 \})
\end{align*}
are isomorphic. Hence there are at most $7 + 10 + 6 + 8 + 4 - 1 = 34$ Schubert three-folds up to isomorphism.
\end{proof}

\section{Constructing Isomorphism: Folding}\label{con iso 2}
In this section, we will prove Theorem \ref{fold-theorem}. We suppose that the assumption in the statement of the theorem holds and we will use the notation given in the statement.

As we did in the equally supported case, if $\lambda$ and $\lambda^\tau$ are $I$ and $I^\tau$-regular weights, then we write $V = L_\lambda$ and $V^\tau = L_{\lambda^\tau}$ with corresponding highest weight vectors $\omega$ and $\omega^\tau$, respectively.

\begin{lemma}\label{fold-pi}
Let $\lambda$ and $\lambda^\tau$ be $I$ and $I^\tau$-regular weights, respectively. If $\lambda^\tau(h_{t_i}) = \sum_{s \in I_i} \lambda(h_s)$ for each $t_i \in S^\tau$, then there is a $\mathfrak{g}^\tau$-homomorphism $\pi: V^\tau \to V$, sending $\omega^\tau$ to $\omega$, where $V$ is regarded as an $\mathfrak{g}^\tau$-module via the homomorphism $\varphi$. The homomorphism $\pi$ satisfies
\begin{equation*}
\pi(\exp(e)v \omega^\tau) = \exp(\varphi(e)) v^{\min} \omega
\end{equation*}
for all $e \in {\mathfrak{n}^\tau}^+$ and $v \in [1, w^\tau]^{I^\tau}$.
\end{lemma}

\begin{proof}
Sending $\omega^\tau$ to $\omega$ induces an $\mathfrak{g}^\tau$-homomorphism $\pi_0: M_{\lambda^\tau} \to M_\lambda$. Recall that $V = M_\lambda/M_\lambda^1$ and $V^\tau = M_{\lambda^\tau}/M_{\lambda^\tau}^1$, where $M_\lambda^1$ and $M_{\lambda^\tau}^1$ are generated by $f_s^{\lambda(h_s) + 1} \omega$ and $f_t^{\lambda^\tau(h_t) + 1} \omega^\tau$, respectively. Notice that
\begin{equation*}
\pi_0 \left( f_{t_i}^{\lambda^\tau(h_{t_i}) + 1} \omega^\tau \right) = \left( \sum_{s \in I_i} f_s \right)^{\sum_{s \in I_i} \lambda(h_s) + 1} \omega \in M_\lambda^1
\end{equation*}
by pigeonhole principle. Hence $\pi_0$ induces an ${\mathfrak{n}^\tau}^-$-homomorphism $\pi: V^\tau \to V$, as required.

Now we prove the required formula. First, both the action of $e \in {\mathfrak{n}^\tau}^+$ and the action $f \in {\mathfrak{n}^\tau}^-$ on $V^\tau$, so we have
\begin{align*}
\pi(\exp(e) \eta_1) = \exp(\varphi(e)) \pi(\eta_1) && \mbox{and} && \pi(\exp(f) \eta_1) = \exp(\varphi(f)) \pi(\eta_1)
\end{align*}
for any $\eta_1 \in V^\tau$. The simple reflection $t_i \in S^\tau$ on $V$ acts by $\exp(f_{t_i}) \exp(-e_{t_i}) \exp(f_{t_i})$. Applying the above formulas, for each $\eta_2 \in V^\tau$, we compute that
\begin{align*}
\pi(t_i \eta_2) & = \pi(\exp(f_{t_i}) \exp(-e_{t_i}) \exp(f_{t_i}) \eta_2) = \exp(\varphi(f_{t_i})) \exp(\varphi(-e_{t_i})) \exp(\varphi(f_{t_i})) \pi(\eta_2) \\
& = \exp \left( \sum_{s_i \in I_i} f_{s_i} \right) \exp \left( - \sum_{s_i \in I_i} e_{s_i} \right) \exp \left( \sum_{s_i \in I_i} f_{s_i} \right) \pi(\eta_2) \\
& = \prod_{s_i \in I_i} \exp(f_{s_i}) \prod_{s_i \in I_i} \exp(-e_{s_i}) \prod_{s_i \in I_i} \exp(f_{s_i}) \pi(\eta_2) \\
& = \prod_{s_i \in I_i} (\exp(f_{s_i}) \exp(-e_{s_i}) \exp(f_{s_i})) \pi(\eta_2) = \prod_{s_i \in I_i} s_i \pi(\eta_2) = t_i \pi(\eta_2).
\end{align*}
Thus, the $\mathfrak{g}^\tau$-homomorphism $\pi$ commutes with the Weyl group $W^\tau$-action. Combining these two results, we conclude that
\begin{equation*}
\pi(\exp(e)v \omega^\tau) = \exp(\varphi(e)) v \omega = \exp(\varphi(e)) v^{\min} \omega
\end{equation*}
since $\omega$ is the highest weight vector corresponding to the $I$-regular weight $\lambda$.
\end{proof}

For $v \in [1, w^\tau]^{I^\tau}$, we denote the root space decompositions
\begin{align*}
{\mathfrak{n}^\tau_v}^+ = \bigoplus_{\beta \in I_{W^\tau}(v)} \mathfrak{g}^\tau_\beta && \mbox{and} && \mathfrak{n}_{v^{\min}}^+ = \bigoplus_{\alpha \in I_W(v^{\min})} \mathfrak{g}_\alpha.
\end{align*}

\begin{lemma}\label{fold-bijective}
Let $v \in [1, w^\tau]^{I^\tau}$. The homomorphism $\pi$ defines a bijection
\begin{equation*}
\exp({\mathfrak{n}^\tau_v}^+)v \omega^\tau \to \exp(\mathfrak{n}_{v^{\min}}^+) v^{\min} \omega.
\end{equation*}
\end{lemma}

\begin{proof}
First, we show that $\pi(\exp({\mathfrak{n}^\tau_v}^+)v \omega^\tau) \subset \exp(\mathfrak{n}_{v^{\min}}^+) v^{\min} \omega$. Take a nonzero root vector $e_\beta \in \mathfrak{g}^\tau_\beta$. Then $\varphi(e_\beta) = \sum_{\alpha \in \Delta_\beta} e_\alpha$ with $e_\alpha \in \mathfrak{g}_\alpha$ by the proof of Lemma \ref{Lie homomorphism}. Then for constants $a_\beta$, we have
\begin{equation*}
    \exp \left( \sum_{\beta \in I_{W^\tau}(v)} a_\beta e_\beta \right) v \omega^\tau \mapsto \exp \left( \sum_{\beta \in I_{W^\tau}(v)} a_\beta \sum_{\alpha \in \Delta_\beta} e_\alpha \right) v^{\min} \omega.
\end{equation*}
We denote the exponential on the right by $E$. Let $\alpha_1 \in R^+ \setminus I_W(v^{\min})$ be a root such that $e_{\alpha_1}$ appearing in the exponential above with minimal height and let $c_1$ be its coefficient. Then any root vector $e_\alpha$ with $\alpha \in R^+ \setminus I_W(v^{\min})$ appearing in $E \exp(- c_1 e_{\alpha_1})$ (writing it as a single exponential using a single exponential by Campbell-Hausdorff formula) satisfies that $\alpha$ has height no less than the height of $\alpha_1$, and the number of roots with the same height as $\alpha_1$ is strictly less, which can be seen by Campbell-Hausdorff formula. Applying the algorithm several times, the minimal height of the roots appearing in $E \exp(- c_1 e_{\alpha_1}) \cdots \exp(- c_k e_{\alpha_k})$ is strictly larger than the height of $\alpha_1$. Therefore, repeat the algorithm finite times, there will not be any root appearing in
\begin{equation*}
    E \exp(- c_1 e_{\alpha_1}) \cdots \exp(- c_n e_{\alpha_n}) =: E'
\end{equation*}
lying in $R^+ \setminus I_W(v^{\min})$ since the height of a root is bounded. In other words, $E' \in \exp(\mathfrak{n}_{v^{\min}}^+)$. We also note that each $\alpha_i \in R^+ \setminus I_W(v^{\min})$ by construction. Therefore,
\begin{align*}
Ev^{\min} \omega & = E' \exp(c_n e_{\alpha_n}) \cdots \exp(c_1 e_{\alpha_1}) v^{\min} \omega \\
& = E' v^{\min} \cdot \exp \left( c_n e_{{v^{\min}}^{-1}(\alpha_n)} \right) \cdots \exp \left( c_1 e_{{v^{\min}}^{-1}(\alpha_1)} \right) \omega \\
& = E' v^{\min} \omega \in \exp(\mathfrak{n}_{v^{\min}}^+) v^{\min} \omega,
\end{align*}
since every $e_\alpha$ acts on $\omega$ is zero.

This algorithm gives a morphism between the affine spaces
\begin{align*}
    \rho: {\mathfrak{n}^\tau_v}^+ \to \mathfrak{n}_{v^{\min}}^+, && \sum_{\beta \in I_{W^\tau}(v)} a_\beta e_\beta \mapsto \sum_{\alpha \in I_W(v^{\min})} a_\alpha e_\alpha.
\end{align*}
The morphism $\rho$ is independent on the construction process. Indeed, different elements $e, e' \in \mathfrak{n}_{v^{\min}}^+$ define different elements $\exp(e) v^{\min} \omega$, $\exp(e') v^{\min} \omega$.

Next, we show that $\rho$ is injective. Suppose that there exists $b_\beta, c_\beta$ such that $\sum_{\beta \in I_{W^\tau}(v)} b_\beta e_\beta$ and $\sum_{\beta \in I_{W^\tau}(v)} c_\beta e_\beta$ have the same image. Then
\begin{align*}
    \exp \left( \sum_{\beta \in I_{W^\tau}(v)} b_\beta e_\beta \right) v \omega^\tau && \mbox{and} && \exp \left( \sum_{\beta \in I_{W^\tau}(v)} c_\beta e_\beta \right) v \omega^\tau
\end{align*}
have the same image under $\pi$. There exist constants $a_\beta$ such that
\begin{equation*}
    \exp \left( \sum_{\beta \in I_{W^\tau}(v)} b_\beta e_\beta \right) \exp \left( \sum_{\beta \in I_{W^\tau}(v)} c_\beta e_\beta \right)^{-1} = \exp \left( \sum_{\beta \in I_{W^\tau}(v)} a_\beta e_\beta \right)
\end{equation*}
by Campbell-Hausdorff formula. Hence
\begin{equation*}
    \exp \left( \sum_{\beta \in I_{W^\tau}(v)} a_\beta e_\beta \right) v \omega^\tau \mapsto v^{\min} \omega
\end{equation*}
under $\pi$. Recall that $v^{\min} \omega = v \omega$, so
\begin{equation*}
    v^{-1} \exp \left( \sum_{\beta \in I_{W^\tau}(v)} a_\beta e_\beta \right) v \omega^\tau \mapsto \omega
\end{equation*}
under $\pi$, as $W^\tau$-action commutes with $\pi$ (see the proof of Lemma \ref{fold-pi}). Since $v^{-1}(\beta)$ is a negative root for $\beta \in I_{W^\tau}(v)$, we see that
\begin{equation*}
    v^{-1} \exp \left( \sum_{\beta \in I_{W^\tau}(v)} a_\beta e_\beta \right) v \omega^\tau = \exp(f) \omega^\tau,
\end{equation*}
where $f \in {\mathfrak{n}^\tau}^-$ is a nonzero element. Its image under $\pi$ is $\exp(f') \omega \neq \omega$, where $f' \in \mathfrak{n}^-$ is nonzero. Therefore, $\rho$ is injective.

According to Ax-Grothendieck theorem, $\rho$ is bijective. Hence the morphism
\begin{equation*}
\exp({\mathfrak{n}^\tau_v}^+)v \omega^\tau \to \exp(\mathfrak{n}_{v^{\min}}^+) v^{\min} \omega
\end{equation*}
is also a bijection.
\end{proof}

\begin{proof}[Proof of Theorem \ref{fold-theorem}]
Take $\lambda$ and $\lambda^\tau$ to be $I$ and $I^\tau$-regular weights, respectively, and assume that $\lambda(h_\alpha) = \lambda'(h'_{\tau(\alpha)})$ for each $\alpha \in \Delta$. Take $G$ (resp. $P$) to be the reductive group (resp. the parabolic subgroup) corresponding to the flag variety $X(w^\tau, A^\tau, I^\tau)$. Then a Schubert cell can be uniquely written as $U_v vP/P$ for some $v \in {W^\tau}^{I^\tau}$, where $U_v$ is the unipotent subgroup with Lie algebra ${\mathfrak{n}_v^\tau}^+$. By Lemma \ref{clo imm}, every element in $X(w^\tau, A^\tau, I^\tau)$ can be written uniquely as $[\exp(e) v \omega^\tau] \in \mathbb{P}(V^\tau)$, the line through $\exp(e) v \omega^\tau \in V^\tau$, for some $v \in [1, w^\tau]^{I^\tau}$ and $e \in {\mathfrak{n}^\tau}^+_v$. Then Lemma \ref{fold-pi} yields that the $\mathfrak{g}^\tau$-homomorphism $\pi$ defines a morphism from $X(w^\tau, A^\tau, I^\tau)$ to $X(w, A, I)$. By Lemma \ref{fold-bijective}, $\pi$ is bijective on each pair of Borel cells $\exp({\mathfrak{n}^\tau_v}^+)v \omega^\tau$ and $\exp(\mathfrak{n}_{v^{\min}}^+) v^{\min} \omega$. This gives the bijection between Schubert varieties $X(w^\tau, A^\tau, I^\tau)$ and $X(w, A, I)$. Therefore, the normality of Schubert varieties implies that $X(w^\tau, A^\tau, I^\tau)$ are $X(w, A, I)$ isomorphic as algebraic varieties.
\end{proof}

\printbibliography

\end{document}